\numberwithin{equation}{section}
\newtheorem{thm}{Theorem}[section]
\newtheorem{prop}[thm]{Proposition}
\newtheorem{lem}[thm]{Lemma}
\newtheorem{defn}[thm]{Definition}
\newcommand{\thmref}[1]{Theorem~\ref{#1}}
\newcommand{\propref}[1]{Proposition~\ref{#1}}
\newcommand{\lemref}[1]{Lemma~\ref{#1}}
\newcommand{\ie}{i.\hspace{.5pt}e.\ }
\newcommand{\tr}{\operatorname{tr}}
\newcommand{\M}{(\mathcal{M},\allowbreak{}\phi,\allowbreak{}\xi,\allowbreak{}\eta,g)}
\newcommand{\MM}{\mathcal{M}}
\newcommand{\Df}{\dot{D}}
\newcommand{\Tf}{\dot{T}}
\newcommand{\Qf}{\dot{Q}}
\newcommand{\Ds}{\ddot{D}}
\newcommand{\Ts}{\ddot{T}}
\newcommand{\Qs}{\ddot{Q}}
\newcommand{\HH}{\mathcal{H}}
\newcommand{\VV}{\mathcal{V}}
\newcommand{\Span}{{\rm span}}
\newcommand{\ff}{\phi}
\newcommand{\F}{\mathcal{F}}
\newcommand{\UU}{\mathcal{U}}
\newcommand{\n}{\nabla}
\newcommand{\N}{\widehat{N}}
\newcommand{\ta}{\theta}
\newcommand{\lm}{\lambda}
\newcommand{\om}{\omega}
\newcommand{\D}{\mathrm{d}}
\newcommand{\g}{\tilde{g}}
\newcommand{\sx}{\mathop{\mathfrak{S}}}
\newcommand{\s}{\mathop{\mathfrak{S}}}
\begin{document}

\vspace{2cm}

\title[Second natural connection on Riemannian $\Pi$-manifolds]
{Second natural connection on Riemannian $\Pi$-manifolds}

\author{Hristo Manev}
\address{Medical University of Plovdiv, Faculty of Pharmacy,
Department of Medical Physics and Biophysics,   15-A Vasil Aprilov
Blvd.,   Plovdiv 4002,   Bulgaria;}
\email{hristo.manev@mu-plovdiv.bg}

\subjclass[2010]{53C25; 53D15; 53C50; 53B05; 53D35; 70G45}

\keywords{second natural connection, first natural connection, affine connection, natural connection, Riemannian $\Pi$-Manifolds}

\begin{abstract}
A natural connection, determined by a property of its torsion tensor, is defined and it is called the second natural connection on Riemannian $\Pi$-manifold, \ie the uniqueness of this connection is proved and a necessary and sufficient condition for coincidence with the first natural connection on the considered manifolds is found. The form of the torsion tensor of the second natural connection is obtained in the classes of the Riemannian $\Pi$-manifolds in which it differs from the first natural connection. An explicit example of dimension 5 is given in support of the proven assertions.
\end{abstract}
\maketitle

\section{Introduction}\label{sect-0}
Objects of investigation in the present work are the almost paracontact almost paracomplex Riemannian manifolds, also known as Riemannian $\Pi$-manifolds \cite{ManSta01,ManVes18}. These manifolds are odd-dimensional and they have traceless induced almost product structure on the paracontact distribution. Moreover, the restriction on the paracontact distribution of the almost paracontact structure is an almost paracomplex structure. The beginning of their investigation is given in \cite{ManSta01} by the name almost paracontact Riemannian manifolds of type $(n,n)$, followed by series of papers (e.g. \cite{ManVes18,IvMan2,HMan3,HM17}).

An important role in the geometry of the manifolds with additional tensor structures play the so-called natural connections, \ie affine connections which preserve the structure tensors and the metric (e.g. \cite{KobNom,Ale-Gan2,Gan-Mi,Mek-P-con,Man-Gri2,StaGri}). In \cite{HM21}, we presented and studied the first natural connection $\Df$ on Riemannian $\Pi$-manifolds.
Here, we introduce a natural connection $\Ds$ determined by a property of its torsion tensor $\Ts$ and we call it the second natural connection on the considered manifolds. We prove the uniqueness of $\Ds$ and we determine a necessary and sufficient condition that $\Ds$ coincides with $\Df$. Then, we obtain the form of $\Ts$ in the classes of a known classification of Riemannian $\Pi$-manifolds where $\Ds$ differs from $\Df$.

The paper is organized as follows.
After the present introductory Section 1, Section 2 gives some preliminary facts about Riemannian $\Pi$-manifolds and recall some definitions and assertions for the first natural connection on the studied manifolds necessary for further investigations. Moreover, we characterize all the basic classes of the considered classification of Riemannian $\Pi$-manifolds with respect to $\Tf$. In Section 3, we define $\Ds$ and determine the class of the studied manifolds for which $\Ds$ coincides with $\Df$. After that, we characterize again all the basic classes but now regarding $\Ts$.
The final Section 4 is devoted to an explicit 5-dimensional example in support of the proven theory.

\section{Preliminaries}
\subsection{Riemannian $\Pi$-Manifolds}\label{sect-1}

Let us denote a {Riemannian $\Pi$-manifold} by $\M$, \ie $\mathcal{M}$ is a $(2n+1)$-di\-men\-sional differentiable manifold with a Riemannian $\Pi$-structure $(\ff,\xi,\eta)$ consisting of a (1,1)-tensor field $\ff$, a Reeb vector field $\xi$ and its dual 1-form $\eta$ as well as $\mathcal{M}$ is equipped with a Rie\-mannian metric $g$ such that the following basic identities and their immediately derived properties are valid:
\begin{equation}\label{strM}
\begin{array}{c}
\ff\xi = 0,\qquad \ff^2 = I - \eta \otimes \xi,\qquad
\eta\circ\ff=0,\qquad \eta(\xi)=1,\\ \
\tr \ff=0,\qquad g(\ff x, \ff y) = g(x,y) - \eta(x)\eta(y),
\end{array}
\end{equation}
\begin{equation}\label{strM2}
\begin{array}{ll}
g(\ff x, y) = g(x,\ff y),\qquad &g(x, \xi) = \eta(x),
\\
g(\xi, \xi) = 1,\qquad &\eta(\n_x \xi) = 0,
\end{array}
\end{equation}
where $I$ and $\n$ denote the identity and the Levi-Civita connection of $g$, respectively (\cite{Sato76,ManVes18}). In the latter equalities and further, $x$, $y$, $z$ stand for arbitrary dif\-fer\-en\-tiable vector fields on $\mathcal{M}$ or tangent vectors at a point of $\MM$.

The associated metric $\g$ of $g$ on $\M$ is an indefinite metric of signature $(n + 1, n)$ and compatible with the manifold in the same way as $g$. It is is defined by $\g(x,y)=g(x,\ff y)+\eta(x)\eta(y)$.

On an arbitrary Riemannian $\Pi$-manifold $\M$, we consider two com\-ple\-men\-tary distributions of $T\MM$ using $\xi$ and $\eta$---the horizontal distribution $\HH=\ker(\eta)$ and the vertical distribution $\VV=\Span(\xi)$. They are mutually orthogonal with respect to $g$ and $\tilde{g}$, \ie
\begin{equation}\label{HHVV}
\HH\oplus\VV =T\MM,\qquad \HH\;\bot\;\VV,\qquad \HH\cap\VV=\{o\},
\end{equation}
where $o$ stands for the zero vector field on $\MM$.
Thus the respective horizontal and vertical projectors are determined by $\mathrm{h}:T\MM\mapsto\HH$ and $\mathrm{v}:T\MM\mapsto\VV$.

An arbitrary vector field $x$ is decomposed in the so-called horizontal and vertical component ($x^{\mathrm{h}}$ and $x^{\mathrm{v}}$, respectively) as follows
\begin{equation}\label{hv}
x=x^{\mathrm{h}}+x^{\mathrm{v}},
\end{equation}
where
\begin{equation}\label{Xhv}
x^{\mathrm{h}}=\ff^2x, \qquad x^{\mathrm{v}}=\eta(x)\xi.
\end{equation}

The $(0,3)$-tensor field $F$, defined by
\begin{equation}\label{F}
F(x,y,z)=g\bigl( \left( \nabla_x \ff \right)y,z\bigr),
\end{equation}
plays an important role in the differential geometry of the considered ma\-ni\-folds.
Moreover, the following general properties of $F$ are valid: \cite{ManSta01}
\begin{equation}\label{F-prop}
\begin{array}{l}
F(x,y,z)=F(x,z,y)=-F(x,\ff y,\ff z) +\eta(y)F(x,\xi,z)+\eta(z)F(x,y,\xi),\\[6pt]
F(x,y,\ff z)=-F(x,\ff y, z)+\eta(z)F(x,\ff y,\xi) +\eta(y)F(x,\ff z,\xi),\\[6pt]
F(x,\ff y,\ff z)=-F(x,\ff^2 y,\ff^2 z),\\[6pt]
F(x,\ff y,\ff ^2 z)=-F(x,\ff^2 y,\ff z).
\end{array}
\end{equation}

\begin{lem}[\cite{ManVes18}]\label{lem-F}
The following identities are valid:
\begin{enumerate}
\item[$1)$] $(\nabla_x \eta)(y)=g( \nabla_x \xi,y)$,
\item[$2)$] $\eta(\nabla_x \xi)=0$,
\item[$3)$] $F(x,\ff y,\xi)=-(\nabla_x \eta)(y)$.
\end{enumerate}
\end{lem}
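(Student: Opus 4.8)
The plan is to verify the three identities in order, relying only on the facts that $\n$ is the Levi-Civita connection of $g$ (hence metric and torsion-free) and on the structural relations collected in \eqref{strM}--\eqref{strM2}. For item 1), I would start from the defining formula for the covariant derivative of a $1$-form, $(\n_x\eta)(y)=x\bigl(\eta(y)\bigr)-\eta(\n_x y)$. Since $\eta(y)=g(y,\xi)$ by \eqref{strM2}, metric compatibility gives $x\bigl(g(y,\xi)\bigr)=g(\n_x y,\xi)+g(y,\n_x\xi)$, and substituting $\eta(\n_x y)=g(\n_x y,\xi)$ makes the first term cancel, leaving $(\n_x\eta)(y)=g(\n_x\xi,y)$. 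For item 2), I would differentiate the constant $g(\xi,\xi)=1$ from \eqref{strM2}: metric compatibility yields $0=x\bigl(g(\xi,\xi)\bigr)=2\,g(\n_x\xi,\xi)$, so $g(\n_x\xi,\xi)=0$, and recognizing $g(\n_x\xi,\xi)=\eta(\n_x\xi)$ via \eqref{strM2} finishes the claim.

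Item 3) is where the real computation lies. I would expand $F(x,\ff y,\xi)=g\bigl((\n_x\ff)(\ff y),\xi\bigr)$ from the definition \eqref{F}, writing $(\n_x\ff)(\ff y)=\n_x(\ff^2 y)-\ff\bigl(\n_x(\ff y)\bigr)$. The key observation is that $g(\ff w,\xi)=g(w,\ff\xi)=0$ for every $w$, by the symmetry $g(\ff x,y)=g(x,\ff y)$ from \eqref{strM2} and $\ff\xi=0$ from \eqref{strM}. This immediately kills the term $g\bigl(\ff(\n_x(\ff y)),\xi\bigr)$; applied to $w=\ff y$ it also gives $g(\ff^2 y,\xi)=0$, so differentiating and using metric compatibility turns the surviving term into $g\bigl(\n_x(\ff^2 y),\xi\bigr)=-g(\ff^2 y,\n_x\xi)$. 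Substituting $\ff^2 y=y-\eta(y)\xi$ from \eqref{strM} and discarding the $\eta(y)\,g(\xi,\n_x\xi)$ contribution by item 2) leaves $-g(y,\n_x\xi)$, which by item 1) equals $-(\n_x\eta)(y)$, as required.

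The main obstacle here is more bookkeeping than conceptual: it lies in organizing the $\ff^2$ term in item 3) and tracking precisely which pieces vanish. I expect no subtlety beyond the careful, repeated use of metric compatibility together with the three relations $\ff\xi=0$, $\ff^2=I-\eta\otimes\xi$, and the $g$-symmetry of $\ff$; indeed, items 1) and 2) are exactly the intermediate facts that make the final substitution in item 3) collapse cleanly.
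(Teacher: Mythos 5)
Your proof is correct: all three verifications go through exactly as written, using only metric compatibility of $\n$, the relations $\ff\xi=0$, $\eta\circ\ff=0$, $\ff^2=I-\eta\otimes\xi$ and the $g$-symmetry of $\ff$ from \eqref{strM}--\eqref{strM2}, with items 1) and 2) feeding into the collapse of the $\ff^2$ term in item 3). The paper itself states this lemma without proof, citing \cite{ManVes18}, and your argument is precisely the standard computation expected there, so there is nothing to compare beyond noting that you have supplied the omitted routine verification.
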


Let $\left(g^{ij}\right)$ be the inverse matrix of $\left(g_{ij}\right)$ of $g$ with respect to
a basis $\left\{\xi;e_i\right\}$ of $T_p\mathcal{M}$ $(i=1,2,\dots,2n; p\in \mathcal{M})$. Then the 1-forms $\theta$, $\theta^*$, $\omega$, called Lee forms, are associated with $F$ and defined by:
\begin{equation}\label{ta-ta*}
\theta=g^{ij}F(e_i,e_j,\cdot),\quad
\theta^*=g^{ij}F(e_i,\ff e_j,\cdot), \quad \omega=F(\xi,\xi,\cdot).
\end{equation}
Taking into account \eqref{F-prop}, the following relations for the Lee forms are valid: \cite{ManSta01}
\begin{equation*}\label{ta-prop}
\begin{array}{l}
\om(\xi)=0,\qquad \ta^*\circ\ff=-\ta\circ\ff^2,\qquad \ta^*\circ\ff^2=\ta\circ\ff.
\end{array}
\end{equation*}

A classification of Riemannian $\Pi$-ma\-ni\-folds with respect to the fundamental tensor $F$, consisting of eleven basic classes $\F_1$, $\F_2$, $\dots$, $\F_{11}$, is given in \cite{ManSta01}. The intersection of the basic classes is the special class $\F_0$ determined by the condition $F=0$. The characteristic conditions of the basic classes in the considered clas\-si\-fi\-cation are: \cite{ManSta01,ManVes18}
\begin{equation}\label{Fcon}
\begin{split}
\F_{1}:\quad &F(x,y,z)=\dfrac{1}{2n}\bigl\{g(\ff x,\ff y)\ta(\ff^2 z)+g(\ff x,\ff z)\ta(\ff^2 y)\\
&\phantom{F(x,y,z)=\dfrac{1}{2n}\bigl\{}-g( x,\ff y)\ta(\ff z)-g( x,\ff z)\ta(\ff y)\bigr\};\\[6pt]
\F_{2}:\quad &F(\xi,y,z)=0,\; F(x,\xi,z)=0,\; \ta=0\\
               &F(x,y,\ff z)+ F(y,z, \ff x)+ F(z, x, \ff y)=0;\\[6pt]
\F_{3}:\quad &F(\xi,y,z)=0,\; F(x,\xi,z)=0,\\
              & F(x,y,z)+F(y,z,x)+F(z,x,y)=0;\\[6pt]
\F_{4}:\quad &F(x,y,z)=\dfrac{\ta(\xi)}{2n}\bigl\{g(\ff x,\ff y)\eta(z)+g(\ff x,\ff z)\eta(y)\bigr\};\\[6pt]
\F_{5}:\quad &F(x,y,z)=\dfrac{\ta^*(\xi)}{2n}\bigl\{g( x,\ff y)\eta(z)+g(x,\ff z)\eta(y)\bigr\};\\[6pt]
\F_{6}:\quad &F(x,y,z)=F(x,y,\xi)\eta(z)+F(x,z,\xi)\eta(y),\\[6pt]
                &F(x,y,\xi)=F(y,x,\xi)=F(\ff x,\ff y,\xi); \\[6pt]
\F_{7}:\quad &F(x,y,z)=F(x,y,\xi)\eta(z)+F(x,z,\xi)\eta(y),\\[6pt]
                &F(x,y,\xi)=-F(y,x,\xi)=F(\ff x,\ff y,\xi); \\[6pt]
\F_{8}:\quad &F(x,y,z)=F(x,y,\xi)\eta(z)+F(x,z,\xi)\eta(y),\\[6pt]
                &F(x,y,\xi)=F(y,x,\xi)=-F(\ff x,\ff y,\xi); \\[6pt]
\F_{9}:\quad &F(x,y,z)=F(x,y,\xi)\eta(z)+F(x,z,\xi)\eta(y),\\[6pt]
                &F(x,y,\xi)=-F(y,x,\xi)=-F(\ff x,\ff y,\xi); \\[6pt]
\F_{10}:\quad &F(x,y,z)=-\eta(x)F(\xi,\ff y,\ff z); \\[6pt]
\F_{11}:\quad
&F(x,y,z)=\eta(x)\left\{\eta(y)\om(z)+\eta(z)\om(y)\right\}.
\end{split}
\end{equation}

In \cite{ManVes18}, the $(1,2)$-tensors $N$ and $\N$ determined by
\begin{equation*}\label{N-nff}
\begin{split}
N(x,y)=&\left(\n_{\ff x}\ff\right)y-\ff\left(\n_{x}\ff\right)y-\left(\n_{x}\eta\right)(y)\xi\\[6pt]
& -\left(\n_{\ff
y}\ff\right)x+\ff\left(\n_{y}\ff\right)x+\left(\n_{y}\eta\right)(x)\xi,
\end{split}
\end{equation*}
\begin{equation*}\label{N1-nff}
\begin{split}
\N(x,y)=&\left(\n_{\ff x}\ff\right)y-\ff\left(\n_{x}\ff\right)y-\left(\n_{x}\eta\right)(y)\xi\\[6pt]
& +\left(\n_{\ff
y}\ff\right)x-\ff\left(\n_{y}\ff\right)x-\left(\n_{y}\eta\right)(x)\xi
\end{split}
\end{equation*}
are called Nijenhuis tensor and associated Nijenhuis tensor, respectively, for the $\Pi$-structure on $\MM$. The tensors $N$ and $\N$ are antisymmetric and symmetric, respectively, \ie the following properties hold
\begin{equation*}\label{NN-prop}
\begin{array}{l}
N(x,y) = -N(y,x),\qquad
\N(x, y) = \N(y, x).
\end{array}
\end{equation*}
The corresponding $(0,3)$-tensors of $N$ and $\N$ on $\M$ are defined by
\begin{equation*}
\begin{array}{ll}
N(x,y,z)=g\left(N(x,y),z\right), \qquad
\N(x,y,z)=g\left(\N(x,y),z\right)
\end{array}
\end{equation*}
and have the following properties: \cite{ManVes18}
\begin{equation}\label{N-prop}
\begin{array}{l}
N(\ff^2 x, \ff y,\ff z)= - N(\ff^2 x, \ff^2 y,\ff^2 z),  \\[6pt]
 N(\ff^2 x, \ff^2 y,\ff^2 z) =
N(\ff x, \ff y,\ff^2 z), \\[6pt]
N( x, \ff^2 y,\ff^2 z)= - N(x, \ff y,\ff z), \\[6pt]
 N(\ff^2 x, \ff^2 y, z) = N(\ff x, \ff y,
z), \\[6pt]
N(\xi, \ff y,\ff z) = {-} N(\xi, \ff^2 y,\ff^2 z), \\[6pt]
 N(\ff x, \ff y, \xi) = N(\ff^2 x,
\ff^2 y, \xi),
\end{array}
\end{equation}
\begin{equation}\label{N1-prop}
\begin{array}{l}
\N(\ff^2 x, \ff y,\ff z)= - \N(\ff^2 x, \ff^2 y,\ff^2 z), \\[6pt] \N(\ff^2 x, \ff^2
y,\ff^2 z) = \N(\ff x, \ff y,\ff^2 z), \\[6pt]
\N( x, \ff^2 y,\ff^2 z)= - \N(x, \ff y,\ff z), \\[6pt] \N(\ff^2 x, \ff^2 y, z) =
\N(\ff x, \ff y, z),\\[6pt]
\N(\xi, \ff y,\ff z) = - \N(\xi, \ff^2 y,\ff^2 z), \\[6pt] \N(\ff x, \ff y, \xi) =
\N(\ff^2 x, \ff^2 y, \xi).
\end{array}
\end{equation}

The tensors $N$ and $\N$ are expressed by means of $F$ through the equalities: \cite{ManVes18}
\begin{equation}\label{NN1-F}
\begin{array}{ll}
N(x,y,z)=F(\ff x,y,z)-F(\ff y,x,z)-F(x,y,\ff z)+F(y,x,\ff z)\\[6pt]
\phantom{N(x,y,z)=}+\eta(z)\left\{F(x,\ff y,\xi)-F(y,\ff
x,\xi)\right\},\\[6pt]
\N(x,y,z)=F(\ff x,y,z)+F(\ff y,x,z)-F(x,y,\ff z)-F(y,x,\ff z)\\[6pt]
\phantom{\N(x,y,z)=}+\eta(z)\left\{F(x,\ff y,\xi)+F(y,\ff x,\xi)\right\}.
\end{array}
\end{equation}

Vice versa, $F$ is expressed in terms of $N$ and $\N$ as follows: \cite{ManVes18}
\begin{equation}\label{F=NN}
\begin{array}{l}
F(x,y,z)=\dfrac14\bigl\{N(\ff x,y,z)+N(\ff x,z,y)+\N(\ff x,y,z)+\N(\ff x,z,y)\bigr\}\\[6pt]
\phantom{F(x,y,z)=}-\dfrac12\eta(x)\bigl\{N(\xi,y,\ff z)+\N(\xi,y,\ff z)+\eta(z)\N(\xi,\xi,\ff y)\bigr\}.
\end{array}
\end{equation}

Let us remark that the class of the normal Riemannian $\Pi$-manifolds, \ie with the condition $N=0$, is $\UU_0=\F_1\oplus\F_2 \oplus \F_4\oplus\F_5\oplus\F_6$. On the other hand, the class with the property $\N=0$ is $\widehat{\UU }_0=\F_3\oplus\F_7$. Applying the expression of $F$ from \eqref{F=NN} for these two classes, we obtain:
\begin{equation*}\label{F-U0}
\begin{array}{ll}
\UU_0: \quad & F(x,y,z)=\dfrac14\bigl\{\N(\ff x,y,z)+\N(\ff x,z,y)\bigr\}\\[6pt]
& \phantom{F(x,y,z)=}-\dfrac12\eta(x)\bigl\{\N(\xi,y,\ff z)+\eta(z)\N(\xi,\xi,\ff y)\bigr\},\\[6pt]
\widehat{\UU}_0: \quad & F(x,y,z)=\dfrac14\bigl\{N(\ff x,y,z)+N(\ff x,z,y)\bigr\}-\dfrac12\eta(x)N(\xi,y,\ff z).
\end{array}
\end{equation*}

The class $\widehat{\UU}_0$ is important for further considerations, as well as its orthogonal complement one, characterized by the following
\begin{lem}\label{Nfifi=0}
The class $\UU_1=\F_1\oplus\F_2\oplus\F_4\oplus\F_5\oplus\F_6\oplus\F_8\oplus\F_9\oplus\F_{10}\oplus\F_{11}$
of the Riemannian $\Pi$-manifolds $\M$ is determined by the condition
\[
N(\ff\cdot,\ff\cdot)=0.
\]
\end{lem}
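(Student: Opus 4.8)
The plan is to exploit that the correspondence $F\mapsto N$ is linear and that $\F_1,\dots,\F_{11}$ furnish a direct-sum decomposition, so it suffices to understand the map $F\mapsto N(\ff\cdot,\ff\cdot)$ on each basic class separately. Concretely, I would substitute $x\to\ff x$, $y\to\ff y$ in \eqref{NN1-F} and simplify using $\eta\circ\ff=0$, $\eta(\ff^2\cdot)=0$, $\ff^3=\ff$ and the general identities \eqref{F-prop}. This reduces the lemma to two claims: (i) $N(\ff\cdot,\ff\cdot)$ vanishes on every basic class occurring in $\UU_1$; and (ii) the linear map $F\mapsto N(\ff\cdot,\ff\cdot)$ is injective on $\widehat{\UU}_0=\F_3\oplus\F_7$. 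Granting these, linearity together with $\F=\bigoplus_i\F_i$ yields the equivalence $N(\ff\cdot,\ff\cdot)=0\Leftrightarrow F\in\UU_1$.

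For (i): on $\UU_0=\F_1\oplus\F_2\oplus\F_4\oplus\F_5\oplus\F_6$ one has $N=0$, so nothing is to prove. For $\F_{10}$ and $\F_{11}$ the defining relations in \eqref{Fcon} force every nonzero value of $F$ to carry a factor $\eta$ on its first slot; since the first slots appearing in $N(\ff x,\ff y,z)$ are all of the form $\ff(\cdot)$ or $\ff^2(\cdot)$, on which $\eta$ vanishes, the expression is identically zero. For $\F_8$ and $\F_9$ the terms with a horizontal third slot die because of factors $\eta(\ff z)$ or $\eta(\ff y)$, while the remaining $\eta(z)$-terms cancel in pairs after applying the symmetry $F(x,y,\xi)=\pm F(y,x,\xi)$ together with $F(x,y,\xi)=\mp F(\ff x,\ff y,\xi)$ and $\ff^3=\ff$; I expect this cancellation to be the only slightly delicate computation in this part.

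For (ii): by (i) and linearity, $N(\ff\cdot,\ff\cdot)[F]$ depends only on the $\widehat{\UU}_0$-component of $F$, so I must establish injectivity there. I would first separate $\F_3$ from $\F_7$ by the position of the value in the third slot. A direct computation gives $N(\ff x,\ff y,\xi)=0$ on $\F_3$, so $N(\ff\cdot,\ff\cdot)[F_3]$ carries no $\eta(z)$-part, whereas the same computation on $\F_7$ yields $N(\ff x,\ff y,z)=4\,\eta(z)\,F(\ff x,\ff^2 y,\xi)$, a pure $\eta(z)$-multiple; hence $N(\ff\cdot,\ff\cdot)[F_3+F_7]=0$ forces each summand to vanish. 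On $\F_7$ the displayed formula then gives $F(\ff x,\ff^2 y,\xi)=0$; since $F(\cdot,\cdot,\xi)$ is supported on $\HH\times\HH$ there and $\ff$ is bijective on $\HH$, this means $F(\cdot,\cdot,\xi)\equiv0$, and the defining relation of $\F_7$ in \eqref{Fcon} gives $F=0$.

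The genuine obstacle is the $\F_3$-case of (ii). Here I would use that $F(\xi,y,z)=F(x,\xi,z)=0$ together with the symmetry in \eqref{F-prop} also gives $F(x,y,\xi)=0$, so that inserting $\xi$ into either of the first two slots of \eqref{NN1-F} annihilates all six terms; thus $N(\xi,\cdot,\cdot)=N(\cdot,\xi,\cdot)=0$ on $\F_3$. The hypothesis $N(\ff\cdot,\ff\cdot)=0$, combined with the identity $N(\ff^2 x,\ff^2 y,z)=N(\ff x,\ff y,z)$ from \eqref{N-prop}, makes $N$ vanish on all horizontal pairs; decomposing the first two arguments through \eqref{hv}--\eqref{Xhv} into horizontal and vertical parts then yields $N\equiv0$ on $\F_3$. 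Since $\F_3\subseteq\widehat{\UU}_0$ satisfies $\N=0$, the reconstruction formula \eqref{F=NN} forces $F=0$, which completes the injectivity and hence the lemma.
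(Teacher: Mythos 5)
Your proof is correct and follows essentially the same route as the paper: the paper's (very terse) proof likewise verifies the condition class by class by substituting the characteristic conditions \eqref{Fcon} into the expression \eqref{NN1-F} of $N$ in terms of $F$. Your write-up merely makes explicit what the paper leaves implicit---the linearity/direct-sum reduction and the injectivity on $\widehat{\UU}_0=\F_3\oplus\F_7$ via \eqref{N-1-11}, $\N=0$ and the reconstruction formula \eqref{F=NN}---and all of these steps check out.
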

\begin{proof}
Using the expression of $N$ in terms of $F$ given in \eqref{NN1-F} and the characteristic conditions \eqref{Fcon} of the basic classes of the considered manifolds, we establish the truthfulness of the lemma.
\end{proof}

It is obvious from \lemref{Nfifi=0} that the condition $N( \ff\cdot,\ff\cdot) \neq 0$ is valid for $\widehat{\UU}_0$. So, we can conclude that the following two properties of $N$ and $\N$ are equivalent:
\[
N(\ff\cdot,\ff\cdot) \neq 0 \qquad \Leftrightarrow \qquad \N(\cdot,\cdot) = 0.
\]

Let us denote by $T$ the {torsion tensor} of an arbitrary affine connection $D$, \ie
\begin{equation}\label{T-def}
T(x,y)=D_xy-D_yx-[x,y].
\end{equation}

The corresponding $(0,3)$-tensor with respect to the metric $g$ is determined by
\begin{equation}\label{D-T-03}
\begin{array}{l}
T(x,y,z)=g(T(x,y),z).
\end{array}
\end{equation}

The associated 1-forms of $T$, denoted by $t$, $t^*$ and $\hat{t}$, are defined by
\begin{equation}\label{t}
\begin{array}{l}
t(x)=g^{ij}T(x,e_i,e_j),\qquad t^*(x)=g^{ij}T(x,e_i,\ff e_j),\qquad \hat{t}(x)=T(x,\xi,\xi)
\end{array}
\end{equation}
with respect to a basis $\left\{\xi;e_i\right\}$ of $T_p\mathcal{M}$ $(i=1,2,\dots,2n; p\in \mathcal{M})$.

\subsection{First natural connection on Riemannian $\Pi$-Manifolds}
In \cite{HM21}, we defined a non-symmetric natural connection and called it the first natural connection on a Riemannian $\Pi$-manifold. We obtained relations between the introduced connection and the Levi-Civita connection, as well as we study some of its curvature characteristics in the so-called main classes, \ie these basic classes in which $F$ is expressed explicitly by the metrics and the Lee forms.
Firstly, we recall some definitions and assertions from \cite{HM21} necessary for further investigations.

Let us consider an arbitrary Riemannian $\Pi$-manifold $\M$.

\begin{defn}[\cite{HM21}]
An affine connection $D$ on a Riemannian $\Pi$-manifold $\M$ is called a {natural connection} for the Riemannian $\Pi$-structure $(\ff,\xi,\eta,g)$ if this structure is parallel with respect to $D$, \ie $D\ff=D\xi=D\eta=Dg=0$.
\end{defn}
As a consequence, the associated metric $\tilde{g}$ is also parallel with respect to $D$ on $\M$, \ie $D\tilde{g}=0$.

Let $Q$ stand for the potential of $D$ with respect to $\n$:
\begin{equation}\label{1}
D_xy=\n_xy+Q(x,y),
\end{equation}

\begin{equation}\label{2.2}
Q(x,y,z)=g\left(Q(x,y),z\right).
\end{equation}

\begin{prop}[\cite{HM21}]\label{thm-Q}
An affine connection $D$ is a natural connection on a Riemannian $\Pi$-manifold if and only if the following properties hold:
\begin{equation*}
\begin{array}{c}
Q(x,y,\ff z)-Q(x,\ff y,z)=F(x,y,z),\\[6pt]
Q(x,y,z)=-Q(x,z,y).
\end{array}
\end{equation*}
\end{prop}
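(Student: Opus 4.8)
The plan is to translate each of the four defining conditions $D\ff=D\xi=D\eta=Dg=0$ into a condition on the potential $Q$ via the substitution $D_xy=\n_xy+Q(x,y)$ from \eqref{1}, and then to observe that two of the four are redundant. Throughout I would work with the $(0,3)$-form $Q(x,y,z)=g(Q(x,y),z)$ and exploit that $g$ is nondegenerate to pass back and forth between vector identities and their $(0,3)$-counterparts.

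First I would handle metric compatibility. Since $\n g=0$, expanding $(D_xg)(y,z)=x\bigl(g(y,z)\bigr)-g(D_xy,z)-g(y,D_xz)$ with \eqref{1} yields
\[
(D_xg)(y,z)=-g(Q(x,y),z)-g(y,Q(x,z))=-Q(x,y,z)-Q(x,z,y),
\]
so $Dg=0$ is equivalent to the second stated identity $Q(x,y,z)=-Q(x,z,y)$. Next, for the structure endomorphism, expanding $(D_x\ff)y=D_x(\ff y)-\ff(D_xy)$ with \eqref{1} gives
\[
(D_x\ff)y=(\n_x\ff)y+Q(x,\ff y)-\ff\, Q(x,y).
\]
Pairing this with $z$, using the definition \eqref{F} of $F$ and the symmetry $g(\ff u,z)=g(u,\ff z)$ from \eqref{strM2}, I obtain $g\bigl((D_x\ff)y,z\bigr)=F(x,y,z)+Q(x,\ff y,z)-Q(x,y,\ff z)$. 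Since $g$ is nondegenerate, $D\ff=0$ is therefore equivalent to the first stated identity $Q(x,y,\ff z)-Q(x,\ff y,z)=F(x,y,z)$.

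The hard part—the step carrying the real content—is to show that the remaining two conditions $D\xi=0$ and $D\eta=0$ are automatic once $D\ff=0$ and $Dg=0$ hold, so that they impose no further constraint on $Q$. Applying $D_x$ to $\ff\xi=0$ and using $D\ff=0$ gives $\ff(D_x\xi)=0$, whence $D_x\xi\in\kr\ff=\Span(\xi)$ by \eqref{strM}. Writing $D_x\xi=\lm(x)\xi$ and using $Dg=0$ on the constant $g(\xi,\xi)=1$ forces $\lm(x)=g(D_x\xi,\xi)=0$, so $D\xi=0$. Then, since $\eta(y)=g(\xi,y)$ by \eqref{strM2}, metric compatibility gives $(D_x\eta)(y)=x\bigl(g(\xi,y)\bigr)-g(\xi,D_xy)=g(D_x\xi,y)=0$, i.e.\ $D\eta=0$.

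Combining the three observations, $D$ is a natural connection (all four parallelism conditions) precisely when $D\ff=0$ and $Dg=0$, which in turn is equivalent to the two displayed identities on $Q$; this would complete the equivalence. I expect the main subtlety to be exactly the reduction from four conditions to two, as it relies essentially on the algebraic interplay of the structure identities $\ff\xi=0$, $g(\xi,\xi)=1$, and $\eta(\cdot)=g(\xi,\cdot)$, rather than on any further analysis of $Q$ itself.
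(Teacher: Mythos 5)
Your proposal is correct. Note that this paper does not actually prove the proposition---it is imported verbatim from \cite{HM21}---so there is no in-text argument to compare against; your proof is the standard one for statements of this type and is what the cited source carries out. Each step checks: $(D_xg)(y,z)=-Q(x,y,z)-Q(x,z,y)$ identifies $Dg=0$ with the skew-symmetry of $Q$ in its last two arguments; $g\bigl((D_x\ff)y,z\bigr)=F(x,y,z)+Q(x,\ff y,z)-Q(x,y,\ff z)$, using the symmetry $g(\ff u,z)=g(u,\ff z)$ from \eqref{strM2} and nondegeneracy of $g$, identifies $D\ff=0$ with the $F$-identity; and your reduction of the four parallelism conditions to these two is sound, since $\ff^2=I-\eta\otimes\xi$ in \eqref{strM} gives $\kr\ff=\Span(\xi)$, differentiating $g(\xi,\xi)=1$ under $Dg=0$ kills the coefficient in $D_x\xi=\lm(x)\xi$, and $\eta=g(\xi,\cdot)$ then yields $D\eta=0$. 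You rightly flag the four-to-two reduction as the only step with real content; everything else is mechanical translation through \eqref{1} and \eqref{2.2}.
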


\begin{defn}[\cite{HM21}]\label{defn-D1}
A natural connection $\Df$, defined by
\begin{equation}\label{D1}
\begin{array}{l}
\Df_xy=\n_xy-\dfrac{1}{2}\bigl\{\left(\n_x\ff\right)\ff
y-\left(\n_x\eta\right)y\cdot\xi\bigr\}-\eta(y)\n_x\xi,
\end{array}
\end{equation}
is called the {first natural connection} on a Riemannian $\Pi$-manifold $\M$.
\end{defn}

As we remarked in \cite{HM21}, the restriction of $\Df$ on the paracontact distribution $\HH$ of $\M$ is the known $P$-connection on the corresponding Riemannian manifold equipped with traceless almost product structure (see e.g. \cite{Mek-P-con}).

Let $\Tf$ denote the torsion tensor of $\Df$, \ie
$$
\Tf(x,y)=\Df_xy-\Df_yx-[x,y].
$$
Then, according to \cite{HM21}, we have
\begin{equation}\label{D1-T}
\begin{array}{l}
\Tf(x,y)=-\dfrac{1}{2}\left\{(\n_x\ff)\ff y-(\n_y\ff)\ff x -\D \eta (x,y)\xi\right\}+ \eta(x)\n_y \xi - \eta(y)\n_x \xi.
\end{array}
\end{equation}

The corresponding $(0,3)$-tensor with respect to $g$ is determined as follows \cite{HM21}
\begin{equation}\label{D1-T-03}
\begin{array}{l}
\Tf(x,y,z)=g(\Tf(x,y),z)
\end{array}
\end{equation}
and it is expressed by $F$ through
\begin{equation}\label{D1-Txyz}
\begin{array}{l}
\Tf(x,y,z)=-\dfrac{1}{2}\left\{F(x,\ff y,z)-F(y,\ff x,z)\right\}\\[6pt]
\phantom{\Tf(x,y,z)=}-\dfrac{1}{2}\eta(z)\left\{F(x,\ff y,\xi)-F(y,\ff x,\xi)\right\}\\[6pt]
\phantom{\Tf(x,y,z)=}+\eta(y)F(x,\ff z,\xi)-\eta(x)F(y,\ff z,\xi).
\end{array}
\end{equation}

Moreover, in \cite{HM21}, the torsion of $\Df$ with respect to $N$ and $\N$ is obtained:
\begin{subequations}\label{T=NN}
\begin{equation}
\begin{array}{l}
\Tf(x,y,z)=-\dfrac{1}{8}\bigl\{2N(\ff x,\ff y,z)+N(\ff x,z,\ff y)-N(\ff y,z,\ff x)\\[6pt]
\phantom{\Tf(x,y,z)=-\dfrac{1}{8}\bigl\{}
+\N(\ff x,z,\ff y)-\N(\ff y,z,\ff x)\bigr\}\\[6pt]
\phantom{\Tf(x,y,z)=}
+\dfrac{1}{4}\eta(x)\bigl\{2N(\xi,\ff y,\ff z)-N(\ff y,\ff z,\xi)\\[6pt]
\phantom{\Tf(x,y,z)=+\dfrac{1}{4}\eta(x)\bigl\{}
+2\eta(z)\N(\xi,\xi,\ff^2 y)-\N(\ff y,\ff z,\xi)\bigr\}\\[6pt]
\phantom{\Tf(x,y,z)=}
-\dfrac{1}{4}\eta(y)\bigl\{2N(\xi,\ff x,\ff z)-N(\ff x,\ff z,\xi)\\[6pt]
\phantom{\Tf(x,y,z)=-\dfrac{1}{4}\eta(y)\bigl\{}
+2\eta(z)\N(\xi,\xi,\ff^2 x)-\N(\ff x,\ff z,\xi)\bigr\}\\[6pt]
\end{array}
\end{equation}
\begin{equation}
\begin{array}{l}
\phantom{\Tf(x,y,z)=}
-\dfrac{1}{8}\eta(z)\bigl\{2N(\ff x,\ff y,\xi)+N(\ff x,\xi,\ff y)-N(\ff y,\xi,\ff x)\\[6pt]
\phantom{\Tf(x,y,z)=-\dfrac{1}{4}\eta(y)\bigl\{}
+\N(\ff x,\xi,\ff y)-\N(\ff y,\xi,\ff x)\bigr\}.
\end{array}
\end{equation}
\end{subequations}
The latter result, using the decomposition in \eqref{HHVV}, \eqref{hv} and \eqref{Xhv}, is transformed into the following form with respect to the horizontal and the vertical components of the vector fields:
\begin{equation}\label{T1Nhv}
\begin{split}
\Tf(x,y,z) =-\dfrac{1}{8}\bigl\{
&\sx N(x^{\mathrm{h}},y^{\mathrm{h}},z^{\mathrm{h}}) + N(x^{\mathrm{h}},y^{\mathrm{h}},z^{\mathrm{h}}) \\[6pt]
&+ \N(y^{\mathrm{h}},z^{\mathrm{h}},x^{\mathrm{h}})-\N(z^{\mathrm{h}},x^{\mathrm{h}},y^{\mathrm{h}})\bigr\} \\[6pt]
-\dfrac{1}{4}\bigl\{
&2N(x^{\mathrm{h}},y^{\mathrm{h}},z^{\mathrm{v}})+N(y^{\mathrm{h}},z^{\mathrm{v}},x^{\mathrm{h}})
+N(z^{\mathrm{v}},x^{\mathrm{h}},y^{\mathrm{h}}) \\[6pt]
&+2N(x^{\mathrm{v}},y^{\mathrm{h}},z^{\mathrm{h}}) + N(y^{\mathrm{h}},z^{\mathrm{h}},x^{\mathrm{v}})
+2N(x^{\mathrm{h}},y^{\mathrm{v}},z^{\mathrm{h}})  \\[6pt]
&+ N(z^{\mathrm{h}},x^{\mathrm{h}},y^{\mathrm{v}})+2\N(y^{\mathrm{h}},z^{\mathrm{h}},x^{\mathrm{v}})
-\N(z^{\mathrm{v}},x^{\mathrm{h}},y^{\mathrm{h}}) \\[6pt]
&-\N(z^{\mathrm{h}},x^{\mathrm{h}},y^{\mathrm{v}})-2\N(z^{\mathrm{v}},x^{\mathrm{v}},y^{\mathrm{h}})
+ 2\N(y^{\mathrm{v}},z^{\mathrm{v}},x^{\mathrm{h}})\bigr\},
\end{split}
\end{equation}
where $\sx$ stands for the cyclic sum by the three arguments.

Taking into account that for the basic classes in $\widehat{\UU}_0$ the tensor $N$ has the form \begin{equation}\label{N-1-11}
\begin{array}{ll}
\F_3: \quad & N(x,y,z)=-2\bigl\{F(\ff x,\ff y,\ff z)+F(\ff^2 x,\ff^2 y,\ff z)\bigr\},\\[6pt]
\F_7: \quad & N(x,y,z)=4F(x,\ff y,\xi)\eta(z)
\end{array}
\end{equation}
and $\N$ vanishes \cite{ManVes18}, as a consequence of \eqref{T1Nhv}, we obtain
\begin{equation}\label{T1Nhv-37}
\begin{split}
\widehat{\UU}_0:\quad \Tf(x,y,z) =-\dfrac{1}{8}\bigl\{
&\sx N(x^{\mathrm{h}},y^{\mathrm{h}},z^{\mathrm{h}}) + N(x^{\mathrm{h}},y^{\mathrm{h}},z^{\mathrm{h}})\bigr\} \\[6pt]
\phantom{\Tf(x,y,z) =}-\dfrac{1}{4}\bigl\{
&\sx N(x^{\mathrm{h}},y^{\mathrm{h}},z^{\mathrm{v}}) +N(x^{\mathrm{h}},y^{\mathrm{h}},z^{\mathrm{v}})\bigr\}
\end{split}
\end{equation}
and respectively
\begin{equation}\label{T1N-F3F7}
\begin{split}
\F_3:\quad \Tf(x,y,z)=-\dfrac{1}{8}\bigl\{
&\sx N(x^{\mathrm{h}},y^{\mathrm{h}},z^{\mathrm{h}}) +N(x^{\mathrm{h}},y^{\mathrm{h}},z^{\mathrm{h}})\bigr\},\\[6pt]
\F_7:\quad \Tf(x,y,z)=-\dfrac{1}{4}\bigl\{
&\sx N(x^{\mathrm{h}},y^{\mathrm{h}},z^{\mathrm{v}}) +N(x^{\mathrm{h}},y^{\mathrm{h}},z^{\mathrm{v}})\bigr\}.
\end{split}
\end{equation}
Then, by virtue of \lemref{lem-F}, \eqref{Fcon}, \eqref{N-prop}, \eqref{N-1-11} and denoting $N(x^{\mathrm{h} },y^{\mathrm{h}},z^{\mathrm{h}})=N^{\mathrm{h}}(x,y,z)$, equalities \eqref{T1Nhv-37} and \eqref{T1N-F3F7} take the following shorter form:
\begin{equation*}\label{T1N-F3F7-abbr}
\begin{split}
&\widehat{\UU}_0:\quad \Tf=-\dfrac{1}{8}\bigl\{\s N^{\mathrm{h}}+N^{\mathrm{h}}\bigr\}+\dfrac{1}{2}\bigl\{\eta\wedge\D\eta+\D\eta\otimes\eta\bigr\},\qquad\\[6pt]
\end{split}
\end{equation*}
\begin{equation*}
\begin{split}
&\F_3:\quad \Tf=-\dfrac{1}{8}\bigl\{\s N^{\mathrm{h}}+N^{\mathrm{h}}\bigr\},\qquad\\[6pt]
&\F_7:\quad \Tf=\dfrac{1}{2}\bigl\{\eta\wedge\D\eta+\D\eta\otimes\eta\bigr\}.
\end{split}
\end{equation*}

Similarly to \eqref{t}, the torsion forms $\dot{t}$, $\dot{t}^*$ and $\widehat{\dot{t}}$ for $\Tf$ with respect to a basis $\left\{\xi;e_i\right\}$ of $T_p\mathcal{M}$ $(i=1,2,\dots,2n; p\in \mathcal{M})$ are defined by: \cite{HM21}
\begin{equation}\label{t1}
\begin{array}{l}
\dot{t}(x)=g^{ij}\Tf(x,e_i,e_j),\qquad \dot{t}^*(x)=g^{ij}\Tf(x,e_i,\ff e_j),\qquad
\widehat{\dot{t}}(x)=\Tf(x,\xi,\xi).
\end{array}
\end{equation}

The following formulae for the torsion forms hold
\begin{equation}\label{t-fB}
\begin{array}{c}
\dot{t}(x)=\dfrac{1}{2}\ta(\ff x)-\ta^*(\xi)\eta(x),\\[6pt]
\dot{t}^*(x)=\dfrac{1}{2}\ta^*(\ff x)-\ta(\xi)\eta(x),\\[6pt]
\widehat{\dot{t}}(x)=\om(\ff x)
\end{array}
\end{equation}
and the following relations between them and the Lee forms are valid \cite{HM21}
\begin{equation}\label{t1t1*}
\begin{array}{c}
\dot{t}^*\circ\ff=\dot{t}\circ\ff^2,\\[6pt]
2\dot{t}\circ\ff=\ta\circ\ff^2,\qquad
2\dot{t}\circ\ff^2=\ta\circ\ff,\\[6pt]
2\dot{t}^*\circ\ff=\ta^*\circ\ff^2,\qquad
2\dot{t}^*\circ\ff^2=\ta^*\circ\ff.
\end{array}
\end{equation}

\begin{thm}\label{thm:FiT1}
Let $\M$ be a $(2n+1)$-dimensional Riemannian $\Pi$-manifold. Then the basic classes $\F_i$ $(i=1,\dots,11)$ are characterised by the following properties of the torsion tensor $\Tf$ of the first natural connection $\Df$:
\[
\begin{array}{rl}
\F_1:\; &\Tf(x,y)=-\dfrac{1}{2n}\left\{\dot{t}(\ff^2 y)\ff^2 x-\dot{t}(\ff^2 x)\ff^2 y
+\dot{t}(\ff x)\ff y-\dot{t}(\ff y)\ff x\right\}; \\[6pt]
\F_2:\; &\Tf(\xi,y)=0,\quad \eta\left(\Tf(x,y)\right)=0, \quad \Tf(x,y)=-\Tf(\ff x,\ff y),\quad \dot{t}=0;\\[6pt]
\F_3:\; &\Tf(\xi,y)=0,\quad \eta\left(\Tf(x,y)\right)=0,\quad \Tf(x,y)=-\ff \Tf(x,\ff y);\\[6pt]
\F_4:\; &\Tf(x,y)=-\dfrac{1}{2n}\dot{t}^*(\xi)\left\{\eta(y)\ff x-\eta(x)\ff y\right\};\\[6pt]
\F_5:\; &\Tf(x,y)=-\dfrac{1}{2n}\dot{t}(\xi)\left\{\eta(y)\ff^2 x-\eta(x)\ff^2 y\right\};\\[6pt]
\F_6:\; &\Tf(x,y)=\eta(x)\Tf(\xi,y)-\eta(y)\Tf(\xi,x)-\eta(\Tf(x,y))\xi,\\[6pt]
&\Tf(\xi,y,z)=-\Tf(\xi,z,y)=\Tf(\xi,\ff y,\ff z)=\dfrac{1}{2}\Tf(y,z,\xi)=\dfrac{1}{2}\Tf(\ff y,\ff z,\xi);\\[6pt]
\F_{7}:\; &\Tf(x,y)=\eta(x)\Tf(\xi,y)-\eta(y)\Tf(\xi,x)+\eta(\Tf(x,y))\xi,\\[6pt]
            &\Tf(\xi,y,z)=-\Tf(\xi,z,y)=\Tf(\xi,\ff y,\ff z)=\dfrac{1}{2}\Tf(y,z,\xi)=-\dfrac{1}{2}\Tf(\ff y,\ff z,\xi);\\[6pt]
\F_{8}:\; &\Tf(x,y)=\eta(x)\Tf(\xi,y)-\eta(y)\Tf(\xi,x)-\eta(\Tf(x,y))\xi,\\[6pt]
            &\Tf(\xi,y,z)=-\Tf(\xi,z,y)=-\Tf(\xi,\ff y,\ff z)=\dfrac{1}{2}\Tf(y,z,\xi)=-\dfrac{1}{2}\Tf(\ff y,\ff z,\xi);\\[6pt]
%
\F_{9}:\; &\Tf(x,y)=\eta(x)\Tf(\xi,y)-\eta(y)\Tf(\xi,x),\\[6pt]
            &\Tf(\xi,y,z)=\Tf(\xi,z,y)=-\Tf(\xi,\ff y,\ff z);\\[6pt]
\F_{10}:\; &\Tf(x,y)=\eta(x)\Tf(\xi,y)-\eta(y)\Tf(\xi,x),\\[6pt]
            &\Tf(\xi,y,z)=-\Tf(\xi,z,y)=-\Tf(\xi,\ff y,\ff z);\\[6pt]
\F_{11}:\; &\Tf(x,y)=\left\{\eta(y)\widehat{\dot{t}}(x) - \eta(x)\widehat{\dot{t}}(y)\right\}\xi.
\end{array}
\]
\end{thm}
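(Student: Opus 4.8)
The plan is to take the single master identity \eqref{D1-Txyz}, which expresses the torsion $\Tf$ entirely in terms of the fundamental tensor $F$, as the engine driving the whole proof. Because this correspondence $F\mapsto\Tf$ is linear and the eleven basic classes arise from a direct-sum decomposition of the space of admissible tensors $F$, it suffices to insert the characteristic condition \eqref{Fcon} of each class $\F_i$ into \eqref{D1-Txyz}, simplify, and read off the resulting identity for $\Tf$; the converse then follows from the fact that the correspondence respects the decomposition (discussed below). So the proof splits into eleven essentially independent computations sharing one method.

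Concretely, for each $i$ I would substitute the defining expression for $F$ from \eqref{Fcon} into \eqref{D1-Txyz}, reduce using the structural relations \eqref{strM}, \eqref{strM2}, the symmetry properties \eqref{F-prop}, and \lemref{lem-F} (especially the identity $F(x,\ff y,\xi)=-(\n_x\eta)(y)$, which controls every vertical term), and finally recover the stated form of $\Tf(x,y)$. For the classes $\F_1$, $\F_4$, $\F_5$ and $\F_{11}$, where $F$ is prescribed explicitly by the metric and a single Lee form, the substitution produces $\Tf$ in terms of the torsion forms $\dot{t}$, $\dot{t}^*$, $\widehat{\dot{t}}$ once the Lee forms $\ta$, $\ta^*$, $\om$ are re-expressed through the inversion formulae \eqref{t-fB} and \eqref{t1t1*}; in particular I would read off $\ta(\xi)=-\dot{t}^*(\xi)$ and $\ta^*(\xi)=-\dot{t}(\xi)$ from \eqref{t-fB}, which feed directly into the coefficients of the $\F_4$ and $\F_5$ formulae.

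The remaining classes $\F_2$, $\F_3$ and $\F_6$ through $\F_{10}$ are defined not by a closed Lee-form expression but by symmetry and kernel conditions, so here the task is to translate each $F$-condition into the corresponding $\Tf$-condition. The vanishing hypotheses $F(\xi,\cdot,\cdot)=0$ and $F(\cdot,\xi,\cdot)=0$ give $\Tf(\xi,y)=0$ and $\eta(\Tf(x,y))=0$; the cyclic and $\ff$-symmetry conditions translate into the compatibility relations $\Tf(x,y)=-\Tf(\ff x,\ff y)$ and $\Tf(x,y)=-\ff\Tf(x,\ff y)$; and the differing sign patterns separating $\F_6,\dots,\F_{10}$ emerge from the interaction of the symmetric or antisymmetric behaviour of $F(x,y,\xi)$ with the last two terms of \eqref{D1-Txyz}, together with $\eta\circ\ff=0$ and $\ff\xi=0$.

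I expect the main obstacle to be twofold. First is the bookkeeping for $\F_6$--$\F_{10}$: disentangling the horizontal, mixed, and vertical contributions so that the relations among $\Tf(\xi,y,z)$, $\Tf(y,z,\xi)$ and $\Tf(\ff y,\ff z,\xi)$ emerge with exactly the right signs demands careful and repeated use of \lemref{lem-F} and \eqref{F-prop}. Second, and more conceptual, is justifying the converse---that each torsion identity \emph{forces} membership in the stated class rather than merely being implied by it. This rests on the observation that $F\mapsto\Tf$ is an equivariant linear map which, as the forward computations show, does not annihilate any of the eleven components; being nonzero on each (irreducible) component it is injective there, so the necessary conditions are also sufficient and all the stated characterizations are genuine equivalences.
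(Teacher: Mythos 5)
Your proposal is correct and follows essentially the same route as the paper's own proof: for each class, substitute the characteristic condition \eqref{Fcon} into the master torsion formula \eqref{D1-Txyz} and simplify using \eqref{strM2}, \eqref{F-prop}, \lemref{lem-F} and \eqref{t-fB} (the paper works only $\F_2$ in detail, cites \cite{HM21} for the main classes $\F_1$, $\F_4$, $\F_5$, $\F_{11}$, and treats the remaining cases ``in a similar way''; your sign identifications $\ta(\xi)=-\dot{t}^*(\xi)$ and $\ta^*(\xi)=-\dot{t}(\xi)$ from \eqref{t-fB} are exactly right). Your final paragraph on the converse, via equivariance and injectivity of $F\mapsto\Tf$ on each component, is an addition the paper omits entirely; it is a reasonable sketch, though note that injectivity on each component alone does not yet show the stated $\Tf$-conditions separate the images, which would need checking, but this goes beyond what the published proof attempts.
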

\begin{proof}
The truthfulness of the assertions in the cases when $\M$ belongs to the main classes $\F_1$, $\F_4$, $\F_5$, $\F_{11}$ is proved in \cite{HM21}.

Now, let $\M \in \F_2$. Taking into account the characteristic conditions \eqref{Fcon} of $F$ in the considered class, the expression of $\Tf$ from \eqref{D1-Txyz} takes the following form:
\begin{equation}\label{T1-F2}
\Tf(x,y,z)=-\dfrac{1}{2}\bigl\{F(x,\ff y,z)-F(y,\ff x,z)\bigr\}.
\end{equation}
Therefore, bearing in mind \eqref{D1-T-03} and \eqref{strM2}, we find that the following identities hold:
\[
\Tf(\xi,y)=0,\qquad \eta\left(\Tf(x,y)\right)=0.
\]
Setting $x=\ff x$ and $y=\ff y$ in \eqref{T1-F2} and using \eqref{F-prop} and \eqref{Fcon}, we have
\begin{equation*}\label{T1ff-F2}
\Tf(\ff x,\ff y,z)=\dfrac{1}{2}\bigl\{F(x,\ff y,z)-F(y,\ff x,z)\bigr\}.
\end{equation*}
So, comparing the latter equality and \eqref{T1-F2}, the following property immediately follows
\[
\Tf(x,y)=-\Tf(\ff x,\ff y).
\]
Using the characteristic conditions of $\F_2$ from \eqref{Fcon} in \eqref{t-fB}, we get $\dot{t}=0$, with which we have established the properties of $\F_2$ with respect to $\Tf$.
The other cases are proved in a similar way.
\end{proof}

\section{Second natural connection on Riemannian $\Pi$-Manifolds}

Let $\Ts$ denote the torsion tensor of a natural connection $\Ds$, \ie according to \eqref{T-def} and \eqref{D-T-03}, we have
\begin{equation}\label{T2-defn}
\begin{array}{l}
\Ts(x,y)=\Ds_xy-\Ds_yx-[x,y],\\[6pt]
\Ts(x,y,z)=g(\Ts(x,y),z).
\end{array}
\end{equation}
Also, let the following property hold for $\Ts$:
\begin{equation}\label{T2-prop}
\begin{split}
&\Ts(x,y,z)+\Ts(y,z,x)+\Ts(\ff x, y,\ff z)+\Ts(y,\ff z,\ff x)\\[6pt]
    &-\eta(x)\left\{\Ts(\xi,y,z)+\Ts(y,z,\xi)-\eta(y)\Ts(\xi,z,\xi)\right\}\\[6pt]
    &-\eta(y)\left\{\Ts(x,\xi,z)+\Ts(\xi,z,x)+\Ts(\ff x,\xi,\ff z)+\Ts(\xi,\ff z,\ff x)\right\}\\[6pt]
    &-\eta(z)\left\{\Ts(x,y,\xi)+\Ts(y,\xi,x)-\eta(y)\Ts(x,\xi,\xi)\right\}=0.
\end{split}
\end{equation}

\begin{defn}\label{defn-D2}
A natural connection $\Ds$ for which \eqref{T2-prop} holds is called the second natural connection on a Riemannian $\Pi$-manifold $\M$.
\end{defn}

Let us remark that the restriction of $\Ds$ on the paracontact distribution $\HH$ of $\M$ is another studied natural connection (called canonical connection) on the corresponding Riemannian manifold equipped with traceless almost product structure (see e.g. \cite{Mihova}).

\begin{thm}\label{thm-1vtora}
On an arbitrary Riemannian $\Pi$-manifold $\M$ there exists a unique second natural connection.
\end{thm}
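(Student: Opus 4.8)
The plan is to pass to the potential of the connection and thereby turn the statement into a pointwise problem of linear algebra. Writing $\Ds_xy=\n_xy+\Qs(x,y)$ as in \eqref{1} and setting $\Qs(x,y,z)=g(\Qs(x,y),z)$, \propref{thm-Q} tells us that $\Ds$ is natural if and only if $\Qs(x,y,z)=-\Qs(x,z,y)$ and $\Qs(x,y,\ff z)-\Qs(x,\ff y,z)=F(x,y,z)$. Since $\n$ is torsion-free, the torsion and the potential are linked by $\Ts(x,y,z)=\Qs(x,y,z)-\Qs(y,x,z)$, and, using only the antisymmetry of $\Qs$ in its last two arguments, this relation inverts to $2\Qs(x,y,z)=\Ts(x,y,z)-\Ts(y,z,x)+\Ts(z,x,y)$. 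Hence a natural connection is completely determined by its torsion, the second naturalness constraint becomes an affine relation tying $\Ts$ to $F$, and the defining property \eqref{T2-prop} is a linear \emph{homogeneous} condition on $\Ts$. Constructing $\Ds$ is thus equivalent to solving, at each point, one linear system for the $(0,3)$-tensor $\Ts$.

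For uniqueness I would take two second natural connections $\Ds'$, $\Ds''$, with potentials $\Qs'$, $\Qs''$ and torsions $\Ts'$, $\Ts''$. Subtracting the two instances of the constraints of \propref{thm-Q}, the difference $\Phi=\Qs'-\Qs''$ satisfies the homogeneous symmetries $\Phi(x,y,z)=-\Phi(x,z,y)$ and $\Phi(x,y,\ff z)=\Phi(x,\ff y,z)$. Because \eqref{T2-prop} is linear and homogeneous in the torsion, the difference $\Theta(x,y,z)=\Phi(x,y,z)-\Phi(y,x,z)$ again satisfies \eqref{T2-prop} with right-hand side zero. The goal is to show that this forces $\Theta=0$, for then the inversion formula $2\Phi(x,y,z)=\Theta(x,y,z)-\Theta(y,z,x)+\Theta(z,x,y)$ yields $\Phi=0$ and so $\Ds'=\Ds''$. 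To obtain $\Theta=0$ I would substitute the two symmetries of $\Phi$ into the homogeneous \eqref{T2-prop}, expand the four $\ff$-twisted terms and the $\eta$-contracted correction terms, and simplify by splitting each argument into its horizontal and vertical parts through \eqref{hv}--\eqref{Xhv}, using \eqref{strM}, \eqref{strM2} and \eqref{F-prop}.

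For existence I would build $\Ds$ as a correction of the already available first natural connection, $\Ds=\Df+\Phi$, where $\Phi$ is sought in the admissible space of potentials that are antisymmetric in the last two arguments and $\ff$-compatible in the sense above. Demanding \eqref{T2-prop} for the torsion $\Ts(x,y,z)=\Tf(x,y,z)+\Phi(x,y,z)-\Phi(y,x,z)$ turns it into an inhomogeneous linear equation for $\Phi$, whose inhomogeneity is the value of the left-hand side of \eqref{T2-prop} on $\Tf$, an explicit tensor computable from \eqref{D1-Txyz}. Since the uniqueness step shows that the associated linear operator is injective on the admissible space, the equation has at most one solution; solving it explicitly produces the required $\Phi$, where one may use that the inhomogeneity automatically carries the symmetries of \eqref{T2-prop} because $\Tf$ is itself the torsion of a natural connection. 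One then verifies directly that the resulting $\Phi$ has the two homogeneous symmetries, so that $\Ds$ is natural, while its torsion satisfies \eqref{T2-prop} by construction.

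The step I expect to be the main obstacle is the algebraic core of the uniqueness argument: showing that the homogeneous \eqref{T2-prop} admits only $\Theta=0$. Concretely, the four $\ff$-twisted torsion terms together with the $\eta$-contracted pieces must be shown to collapse, via the antisymmetry $\Phi(x,y,z)=-\Phi(x,z,y)$ and the $\ff$-compatibility $\Phi(x,y,\ff z)=\Phi(x,\ff y,z)$, into an expression whose vanishing is equivalent to $\Phi=0$. This amounts to proving that the defining identity \eqref{T2-prop} is an invertible linear condition on the admissible potentials, and the careful horizontal/vertical bookkeeping together with the repeated use of \eqref{F-prop} is where the real work lies.
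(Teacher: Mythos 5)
Your reduction to pointwise linear algebra is set up correctly: the inversion formula $2\Qs(x,y,z)=\Ts(x,y,z)-\Ts(y,z,x)+\Ts(z,x,y)$ does follow from the antisymmetry of $\Qs$ in its last two slots, so a natural connection is determined by its torsion, and the difference $\Phi$ of two natural potentials does satisfy the homogeneous symmetries $\Phi(x,y,z)=-\Phi(x,z,y)$ and $\Phi(x,y,\ff z)=\Phi(x,\ff y,z)$. But as written the proposal proves neither half of the theorem. For existence, injectivity of the linear operator (even granting it) yields only \emph{at most one} solution; you never exhibit the solution, and producing it is the entire content of the paper's existence proof, namely the explicit potential \eqref{Q-can-F}, $\Qs(x,y,z)=\Qf(x,y,z)-\frac{1}{8}\bigl\{N(\ff^2 z,\ff^2 y,\ff^2 x)+2\eta(x)N(\ff z,\ff y,\xi)\bigr\}$, which is then checked against \propref{thm-Q} and shown, via \eqref{T2-F} together with \eqref{F-prop} and \eqref{N-prop}, to satisfy \eqref{T2-prop}. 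For uniqueness, the computation you defer as ``the main obstacle'' is likewise never performed. So this is a plan for a proof, not a proof.

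More seriously, the deferred step cannot be completed in the form you state it: the homogeneous \eqref{T2-prop} does \emph{not} force $\Theta=0$ on the admissible space when $n\geq 2$. Evaluate \eqref{T2-prop} with a vertical argument: for $y=\xi$ the four terms of the first line are exactly the four terms in the bracket multiplying $\eta(y)$, and the analogous cancellation occurs for $x=\xi$ and $z=\xi$ (using $\ff\xi=0$). Hence \eqref{T2-prop} is equivalent to the purely horizontal identity
\begin{equation*}
\Ts(x,y,z)+\Ts(y,z,x)+\Ts(\ff x,y,\ff z)+\Ts(y,\ff z,\ff x)=0,\qquad x,y,z\in\HH,
\end{equation*}
and carries no information about torsion components with a vertical argument. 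Now take $\Phi(x,y,z)=\eta(x)\beta(y,z)$ with $\beta$ a nonzero horizontal 2-form satisfying $\beta(\ff y,z)=\beta(y,\ff z)$ (for $n\geq 2$ such $\beta$ exist, e.g.\ any 2-form on the $(+1)$-eigendistribution of $\ff$ in $\HH$). This $\Phi$ satisfies both homogeneous symmetries, so it is a difference of natural potentials, yet $\Theta(x,y,z)=\eta(x)\beta(y,z)-\eta(y)\beta(x,z)\neq 0$ lies in the kernel of the homogeneous \eqref{T2-prop}: all its contributions carry a vertical argument, where the identity is vacuous. So no amount of horizontal/vertical bookkeeping with \eqref{F-prop} will yield the injectivity you need; your route diverges here from the paper, whose uniqueness clause is not a kernel computation at all but rests on the explicitness of the constructed potential in terms of $F$ and $N$. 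If you want to salvage your scheme, you must first isolate which components of the potential \eqref{T2-prop} actually controls (only the totally horizontal ones) and account separately for the components of type $\Qs(\xi,\cdot,\cdot)$ commuting with $\ff$, which neither naturality nor \eqref{T2-prop} constrains.
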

\begin{proof}
Let us construct an affine connection $\Ds$ on a Riemannian $\Pi$-ma\-ni\-fold $\M$ as follows:
\begin{equation}\label{D=nQ}
g(\Ds_xy,z)=g(\n_xy,z)+\Qs(x,y,z),
\end{equation}
where the potential $\Qs$ of $\Ds$ is determined by
\begin{equation}\label{Q-can-F}
\begin{split}
\Qs(x,y,z)=&\;\Qf(x,y,z)-\dfrac{1}{8}\left\{N(\ff^2 z,\ff^2 y,\ff^2 x)+2\eta(x)N(\ff z,\ff y,\xi)\right\}.
\end{split}
\end{equation}

Using \eqref{Q-can-F}, we verify that $\Ds$ satisfies the conditions of \propref{thm-Q}, \ie $\Ds$ is a natural connection on $\M$.

From \eqref{T2-defn}, the symmetry of $\n$ and the analogous definitions of \eqref{1} and \eqref{2.2} for $\Ds$, it follows that
\begin{equation*}\label{T2QQ}
\Ts(x,y,z)=\Qs(x,y,z)-\Qs(y,x,z).
\end{equation*}

Taking into account the latter equality, \eqref{Q-can-F} and \eqref{T=NN}, we obtain
\begin{equation}\label{T2=T1+}
\begin{split}
\Ts(x,y,z)=\Tf(x,y,z)&-\dfrac{1}{8}\left\{N(\ff^2 z,\ff^2 y,\ff^2 x)-N(\ff^2 z,\ff^2
x,\ff^2 y)\right.\\[6pt]
&\phantom{+\dfrac{1}{8}\left\{\right.}\left.+2\eta(x)N(\ff z,\ff
y,\xi)-2\eta(y)N(\ff z,\ff x,\xi)\right\},
\end{split}
\end{equation}
which is equivalent to
\begin{equation}\label{T2-Nhv}
\begin{split}
\Ts(x,y,z)=\ \Tf(x,y,z)&+\dfrac{1}{8}\bigl\{\sx N(x^{\mathrm{h}},y^{\mathrm{h}},z^{\mathrm{h}}) - N(x^{\mathrm{h}},y^{\mathrm{h}},z^{\mathrm{h}})\bigr\}\\[6pt]
&+\dfrac{1}{4}\bigl\{\sx N(x^{\mathrm{h}},y^{\mathrm{h}},z^{\mathrm{v}}) - N(x^{\mathrm{h}},y^{\mathrm{h}},z^{\mathrm{v}})\bigr\}.
\end{split}
\end{equation}

Substituting the form of $\Tf$ from \eqref{D1-Txyz} into \eqref{T2=T1+}, we get
\begin{equation}\label{T2-F}
\begin{split}
\Ts(x,y,z)=&-\dfrac{1}{2}\left\{F(x,\ff y,z)-F(y,\ff x,z)\right\}\\[6pt]
&-\dfrac{1}{2}\eta(z)\left\{F(x,\ff y,\xi)-F(y,\ff x,\xi)\right\}\\[6pt]
&+\eta(y)F(x,\ff z,\xi)-\eta(x)F(y,\ff z,\xi)\\[6pt]
&+\dfrac{1}{8}\left\{N(\ff^2 z,\ff^2 y,\ff^2 x)-N(\ff^2 z,\ff^2
x,\ff^2 y)\right.\\[6pt]
&\phantom{+\dfrac{1}{8}\left\{\right.}\left.+2\eta(x)N(\ff z,\ff y,\xi)-2\eta(y)N(\ff z,\ff x,\xi)\right\}.
\end{split}
\end{equation}

Then, bearing in mind \eqref{F-prop} and \eqref{N-prop}, we obtain by direct verification that \eqref{T2-prop} holds for $\Ds$. Therefore, \eqref{D=nQ} and \eqref{Q-can-F} determine the natural connection $\Ds$ which is the second natural connection on $\M$.

Taking into account \eqref{D=nQ}, \eqref{Q-can-F} and the corresponding form of $\Qf$ and $N$ from \eqref{D1}
and \eqref{NN1-F}, we have an explicit expression of $\Ds$ in terms of $F$. This fact uniquely defines the considered manifold and proves the uniqueness of the second natural connection.
\end{proof}

Using \eqref{NN1-F}, the formula in \eqref{T2-F} takes the following form:
\begin{equation}\label{T2-F+F}
\begin{split}
\Ts(x,y,z)=&-\dfrac{1}{2}\left\{F(x,\ff y,z)-F(y,\ff x,z)\right\}\\[6pt]
&-\dfrac{1}{2}\eta(z)\left\{F(x,\ff y,\xi)-F(y,\ff x,\xi)\right\}\\[6pt]
&+\eta(y)F(x,\ff z,\xi)-\eta(x)F(y,\ff z,\xi)\\[6pt]
&-\dfrac{1}{8}\left\{2F(\ff^2 z,\ff^2 x,\ff y)+F(\ff x,\ff^2 z,\ff^2 y)-F(\ff y,\ff^2 z,\ff^2 x)\right.\\[6pt]
&\phantom{-\dfrac{1}{8}\left\{\right.}\left.-F(\ff^2 x,\ff^2 z,\ff y)+F(\ff^2 y,\ff^2 z,\ff x)\right\}\\[6pt]
&-\dfrac{1}{4}\eta(x)\left\{F(\ff^2 z,\ff y,\xi)-F(\ff^2 y,\ff z,\xi)\right.\\[6pt]
&\phantom{-\dfrac{1}{4}\eta(x)\left\{\right.}\left.+F(\ff z,\ff^2 y,\xi)-F(\ff y,\ff^2 z,\xi)\right\}\\[6pt]
&+\dfrac{1}{4}\eta(y)\left\{F(\ff^2 z,\ff x,\xi)-F(\ff^2 x,\ff z,\xi)\right.\\[6pt]
&\phantom{+\dfrac{1}{4}\eta(y)\left\{\right.}\left.+F(\ff z,\ff^2 x,\xi)-F(\ff x,\ff^2 z,\xi)\right\}.
\end{split}
\end{equation}

Similarly to \eqref{t}, we define the torsion forms $\ddot{t}$, $\ddot{t}^*$ and $\widehat{\ddot{t}}$ for $\Ts$ with respect to a basis $\left\{\xi;e_i\right\}$ of $T_p\mathcal{M}$ $(i=1,2,\dots,2n; p\in \mathcal{M})$:
\begin{equation}\label{t2}
\begin{array}{l}
\ddot{t}(x)=g^{ij}\Ts(x,e_i,e_j),\qquad
\ddot{t}^*(x)=g^{ij}\Ts(x,e_i,\ff e_j),\qquad
\widehat{\ddot{t}}(x)=\Ts(x,\xi,\xi).
\end{array}
\end{equation}

By dint of \eqref{T2=T1+}, \eqref{T2-F+F}, \eqref{t2}, \eqref{t-fB} and $\eta(e_i)=0$ $(i=1,\dots,2n)$, we obtain the following expression of $\ddot{t}$ in terms of $\ta$, $\ta^*$ and $\om$ of $\M$:
\begin{equation}\label{t-2}
\ddot{t}(x)=\dot{t}(x)=\dfrac{1}{2}\ta(\ff x)-\ta^*(\xi)\eta(x).
\end{equation}
By an analogous approach, we calculate the form of $\ddot{t}^*$ and $\widehat{\ddot{t}}$:
\begin{equation}\label{t*-om-2}
\begin{array}{l}
\ddot{t}^*(x)=\dot{t}^*(x)=\dfrac{1}{2}\ta^*(\ff x)-\ta(\xi)\eta(x),\qquad \widehat{\ddot{t}}(x)=\widehat{\dot{t}}(x)=\om(\ff x).
\end{array}
\end{equation}

Thus, we obtained that the torsion forms of the first and second natural connection coincide.

Similarly to \eqref{t1t1*}, we obtain the following relation between $\ddot{t}$ and $\ddot{t}^*$:
\begin{equation*}\label{t2t2*}
\ddot{t}^*\circ\ff=\ddot{t}\circ\ff^2.
\end{equation*}

\begin{thm}\label{thm-D1=D2}
The first natural connection $\Df$ coincides with the second natural connection $\Ds$ if and only if
$\M \in \UU_1$.
\end{thm}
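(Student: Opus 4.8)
The plan is to reduce the coincidence $\Ds=\Df$ to a single tensor identity and then match it against the characterisation of $\UU_1$ supplied by \lemref{Nfifi=0}. Since both connections are written as $\n$ plus a potential through \eqref{D=nQ} and \eqref{1}--\eqref{2.2}, and $g$ is non-degenerate, one has $\Ds=\Df$ if and only if $\Qs=\Qf$ as $(0,3)$-tensors. Reading off the difference from \eqref{Q-can-F}, this is equivalent to the pointwise condition
\[
N(\ff^2 z,\ff^2 y,\ff^2 x)+2\eta(x)N(\ff z,\ff y,\xi)=0
\]
for all $x,y,z$; call it $(\ast)$. By \lemref{Nfifi=0}, the membership $\M\in\UU_1$ is exactly the condition $N(\ff\cdot,\ff\cdot)=0$, that is $N(\ff z,\ff y,w)=0$ for every $w$. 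So the whole statement becomes the equivalence of $(\ast)$ with $N(\ff\cdot,\ff\cdot)=0$, to be proved with the algebraic identities for $N$ in \eqref{N-prop} and the structural relations $\ff^3=\ff$, $\ff^4=\ff^2$, $\eta\circ\ff=\eta\circ\ff^2=0$ coming from \eqref{strM}.

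For sufficiency I would assume $\M\in\UU_1$. Then $N(\ff z,\ff y,\xi)=0$ annihilates the second summand of $(\ast)$ immediately, while the first summand vanishes because $N(\ff^2 z,\ff^2 y,\ff^2 x)=N(\ff z,\ff y,\ff^2 x)$ by the second identity of \eqref{N-prop} and $N(\ff\cdot,\ff\cdot,\cdot)=0$. Hence $(\ast)$ holds and $\Ds=\Df$.

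For necessity I would start from $(\ast)$ and extract the two slots of $N(\ff\cdot,\ff\cdot)$ by two specialisations of $x$. Replacing $x$ by $\ff^2 x$ and using $\ff^4=\ff^2$ together with $\eta\circ\ff^2=0$ removes the $\eta(x)$-term and leaves $N(\ff^2 z,\ff^2 y,\ff^2 x)=0$; the second identity of \eqref{N-prop} then upgrades this to $N(\ff z,\ff y,\ff^2 x)=0$, which controls the third argument over the horizontal distribution $\HH$. Setting instead $x=\xi$ and using $\ff^2\xi=0$, $\eta(\xi)=1$ gives $N(\ff z,\ff y,\xi)=0$, which controls the third argument over the vertical distribution $\VV$. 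Combining the two through the decomposition \eqref{hv}--\eqref{Xhv} yields $N(\ff z,\ff y,w)=0$ for every $w$, i.e. $N(\ff\cdot,\ff\cdot)=0$, so $\M\in\UU_1$ by \lemref{Nfifi=0}.

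The step I expect to be the main obstacle is precisely this last recovery: the target condition $N(\ff\cdot,\ff\cdot)=0$ leaves the third argument entirely free, whereas $(\ast)$ only ever sees that slot through $\ff^2 x$ (horizontal) or through the isolated $\xi$-term (vertical). The two specialisations of $x$ must therefore be chosen so as to capture the horizontal and vertical parts of the third argument separately, and one must keep careful track of the powers of $\ff$ and invoke the correct relation among those in \eqref{N-prop} when passing from $N(\ff^2\cdot,\ff^2\cdot,\ff^2\cdot)$ to $N(\ff\cdot,\ff\cdot,\ff^2\cdot)$.
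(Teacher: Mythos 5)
Your proof is correct and takes essentially the same route as the paper: the paper likewise reads the coincidence condition off the potential difference in \eqref{Q-can-F}, asserting that its vanishing is equivalent to $N(\ff\cdot,\ff\cdot)=0$, and then concludes via \lemref{Nfifi=0}. Your two specialisations $x\mapsto\ff^2 x$ and $x=\xi$, combined with the second identity of \eqref{N-prop} and the decomposition \eqref{hv}--\eqref{Xhv}, simply make explicit the step the paper compresses into ``we immediately establish''.
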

\begin{proof}
From \eqref{Q-can-F} we immediately establish that a necessary and sufficient condition for $\Df$ and $\Ds$ to coincide is $N(\ff\cdot,\ff\cdot)=0$.
According to \lemref{Nfifi=0}, the latter equality defines the class $\UU_1$ of the Riemannian $\Pi$-manifolds. Thus, we proved the truthfulness of the theorem.
\end{proof}

Let us remark that the assertions proved in \cite{HM21} for $\Df$ are also valid for $\Ds$ when $\M$ belongs to $\UU_1$, bearing in mind \thmref{thm-D1=D2}.

Considering \eqref{T1Nhv}, we obtain the following formula for the torsions of $\Df$ and $\Ds$ on $\M \in \UU_1$:
\[
\begin{split}
\Ts(x,y,z)=\Tf(x,y,z)=&-\dfrac{1}{8}\bigl\{
\N(y^{\mathrm{h}},z^{\mathrm{h}},x^{\mathrm{h}})-\N(z^{\mathrm{h}},x^{\mathrm{h}},y^{\mathrm{h}})\bigr\} \\[6pt]
&-\dfrac{1}{4}\bigl\{
N(y^{\mathrm{h}},z^{\mathrm{v}},x^{\mathrm{h}})+N(z^{\mathrm{v}},x^{\mathrm{h}},y^{\mathrm{h}}) \\[6pt]
&\phantom{-\dfrac{1}{4}\bigl\{}-\N(z^{\mathrm{v}},x^{\mathrm{h}},y^{\mathrm{h}}) - \N(z^{\mathrm{h}},x^{\mathrm{h}},y^{\mathrm{v}})\bigr\}\\[6pt]
&-\dfrac{1}{2}\bigl\{
N(x^{\mathrm{v}},y^{\mathrm{h}},z^{\mathrm{h}}) +N(x^{\mathrm{h}},y^{\mathrm{v}},z^{\mathrm{h}}) \\[6pt]
&\phantom{-\dfrac{1}{4}\bigl\{}+\N(y^{\mathrm{h}},z^{\mathrm{h}},x^{\mathrm{v}})-\N(z^{\mathrm{v}},x^{\mathrm{v}},y^{\mathrm{h}})  + \N(y^{\mathrm{v}},z^{\mathrm{v}},x^{\mathrm{h}})\bigr\}.
\end{split}
\]

The torsion tensors $\Tf$ and $\Ts$ differ from each other when the Riemannian $\Pi$-manifold belongs to the classes $\F_3$ and $\F_7$ or to their direct sums with other classes.
Using \eqref{T1Nhv-37} and \eqref{T2-Nhv}, we obtain the form of $\Ts$ when $\M$ is from the class $\widehat{\UU}_0$, as well as when the considered manifold is from the separate two basic classes $\F_3$ and $\F_7$:
\begin{equation*}\label{T2N-F3F7}
\begin{split}
\widehat{\UU}_0:\quad \Ts(x,y,z)=&-\dfrac{1}{4} N(x^{\mathrm{h}},y^{\mathrm{h}},z^{\mathrm{h}})-\dfrac{1}{2} N(x^{\mathrm{h}},y^{\mathrm{h}},z^{\mathrm{v}}),\\[6pt]
\F_3:\quad \Ts(x,y,z)=&-\dfrac{1}{4}N(x^{\mathrm{h}},y^{\mathrm{h}},z^{\mathrm{h}}),\\[6pt]
\F_7:\quad \Ts(x,y,z)=&-\dfrac{1}{2}N(x^{\mathrm{h}},y^{\mathrm{h}},z^{\mathrm{v}}).
\end{split}
\end{equation*}
The latter three formulae, using \lemref{lem-F}, \eqref{Fcon}, \eqref{N-prop} and \eqref{N-1-11}, take the following shorter form:
\begin{equation*}\label{T2N-F3F7-abbr}
\begin{split}
&\widehat{\UU}_0:\quad \Ts=-\dfrac{1}{4} N^{\mathrm{h}}+\D\eta\otimes\eta,\\[6pt]
&\F_3:\quad \Ts=-\dfrac{1}{4} N^{\mathrm{h}},\\[6pt]
&\F_7:\quad \Ts=\D\eta\otimes\eta.
\end{split}
\end{equation*}

\begin{thm}\label{thm:FiT2}
Let $\M$ be a $(2n+1)$-dimensional Riemannian $\Pi$-manifold. Then the basic classes $\F_i$ $(i=1,\dots,11)$ are characterised by the following properties of the torsion tensor $\Ts$ of the second natural connection $\Ds$:
\[
\begin{array}{rl}
\F_1:\; &\Ts(x,y)=-\dfrac{1}{2n}\left\{\ddot{t}(\ff^2 y)\ff^2 x-\ddot{t}(\ff^2 x)\ff^2 y +\ddot{t}(\ff x)\ff y-\ddot{t}(\ff y)\ff x\right\}; \\[6pt]
\F_2:\; &\Ts(\xi,y)=0,\quad \eta\left(\Ts(x,y)\right)=0,\quad \Ts(x,y)=-\Ts(\ff x,\ff y),\quad \ddot{t}=0;\\[6pt]
\F_3:\; &\Ts(\xi,y)=0,\quad \eta\left(\Ts(x,y)\right)=0,\quad \Ts(x,y)=-\ff \Ts(x,\ff y);\\[6pt]
\F_4:\; &\Ts(x,y)=-\dfrac{1}{2n}\ddot{t}^*(\xi)\left\{\eta(y)\ff x-\eta(x)\ff y\right\};\\[6pt]
\end{array}
\]
\[
\begin{array}{rl}
\F_5:\; &\Ts(x,y)=-\dfrac{1}{2n}\ddot{t}(\xi)\left\{\eta(y)\ff^2 x-\eta(x)\ff^2 y\right\};\\[6pt]
\F_6:\; &\Ts(x,y)=\eta(x)\Ts(\xi,y)-\eta(y)\Ts(\xi,x)-\eta\left(\Ts(x,y)\right)\xi,\\[6pt]
&\Ts(\xi,y,z)=-\Ts(\xi,z,y)=\Ts(\xi,\ff y,\ff z)=\dfrac{1}{2}\Ts(y,z,\xi)=\dfrac{1}{2}\Ts(\ff y,\ff z,\xi);\\[6pt]
\F_{7}:\; &\Ts(x,y)=\eta(x)\Ts(\xi,y)-\eta(y)\Ts(\xi,x)+\eta\left(\Ts(x,y)\right)\xi,\\[6pt]
            &\Ts(\xi,y,z)=-\Ts(\xi,z,y)=\Ts(\xi,\ff y,\ff z),\quad \Ts(y,z,\xi)=\Ts(\ff y,\ff z,\xi);\\[6pt]
\F_{8}:\; &\Ts(x,y)=\eta(x)\Ts(\xi,y)-\eta(y)\Ts(\xi,x)-\eta\left(\Ts(x,y)\right)\xi,\\[6pt]
            &\Ts(\xi,y,z)=-\Ts(\xi,z,y)=-\Ts(\xi,\ff y,\ff z)=\dfrac{1}{2}\Ts(y,z,\xi)=-\dfrac{1}{2}\Ts(\ff y,\ff z,\xi);\\[6pt]
\F_{9}:\; &\Ts(x,y)=\eta(x)\Ts(\xi,y)-\eta(y)\Ts(\xi,x),\\[6pt]
            &\Ts(\xi,y,z)=\Ts(\xi,z,y)=-\Ts(\xi,\ff y,\ff z);\\[6pt]
\F_{10}:\; &\Ts(x,y)=\eta(x)\Ts(\xi,y)-\eta(y)\Ts(\xi,x),\\[6pt]
            &\Ts(\xi,y,z)=-\Ts(\xi,z,y)=-\Ts(\xi,\ff y,\ff z);\\[6pt]
\F_{11}:\; &\Ts(x,y)=\left\{\eta(y)\widehat{\ddot{t}}(x) - \eta(x)\widehat{\ddot{t}}(y)\right\}\xi.
\end{array}
\]
\end{thm}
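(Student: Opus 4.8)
The plan is to split the eleven basic classes into two groups according to \thmref{thm-D1=D2}: the nine classes making up $\UU_1$, on which $\Ds$ coincides with $\Df$, and the two remaining classes $\F_3$ and $\F_7$, on which the two connections genuinely differ.

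For the nine classes $\F_1,\F_2,\F_4,\F_5,\F_6,\F_8,\F_9,\F_{10},\F_{11}$ contained in $\UU_1$, \thmref{thm-D1=D2} yields $\Ds=\Df$, hence $\Ts=\Tf$ identically on each of them. Together with the coincidence of the torsion forms recorded in \eqref{t-2} and \eqref{t*-om-2}, namely $\ddot{t}=\dot{t}$, $\ddot{t}^*=\dot{t}^*$ and $\widehat{\ddot{t}}=\widehat{\dot{t}}$, every characterization of $\Ts$ in these classes is obtained from the corresponding one of $\Tf$ in \thmref{thm:FiT1} by the formal replacements $\Tf\leadsto\Ts$ and $\dot{t}\leadsto\ddot{t}$. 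Thus these nine cases require no new computation and follow by quoting \thmref{thm:FiT1}.

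For the two exceptional classes I would argue from the explicit forms of $\Ts$ derived just above the statement, namely $\Ts=-\frac14 N^{\mathrm{h}}$ on $\F_3$ and $\Ts=\D\eta\otimes\eta$ on $\F_7$. On $\F_3$, since $N^{\mathrm{h}}$ depends only on the horizontal components and $\xi^{\mathrm{h}}=\ff^2\xi=0$, the first two identities $\Ts(\xi,y)=0$ and $\eta(\Ts(x,y))=0$ are immediate; using $(\ff y)^{\mathrm{h}}=\ff y^{\mathrm{h}}$ and $\ff^3=\ff$, the remaining relation $\Ts(x,y)=-\ff\,\Ts(x,\ff y)$ reduces to $N(\ff^2 x,\ff^2 y,\ff^2 z)=-N(\ff^2 x,\ff y,\ff z)$, which is the third line of \eqref{N-prop} with $x$ replaced by $\ff^2 x$. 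On $\F_7$, the form $\Ts(x,y,z)=\D\eta(x,y)\eta(z)$ makes $\Ts(x,y)$ a multiple of $\xi$, so $\eta(\Ts(x,y))=\D\eta(x,y)$ and the displayed decomposition of $\Ts(x,y)$ holds once one checks $\D\eta(\xi,\cdot)=0$; this in turn follows from \lemref{lem-F}$(3)$, which rewrites $\n\eta$ through $F(\cdot,\ff\cdot,\xi)$, together with the $\F_7$-conditions in \eqref{Fcon} and $\ff\xi=0$. The same two ingredients give $\Ts(\xi,y,z)=0$ and the invariance $\Ts(y,z,\xi)=\Ts(\ff y,\ff z,\xi)$, the latter from the relation $F(a,b,\xi)=F(\ff a,\ff b,\xi)$.

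I expect the case $\F_7$ to be the main obstacle: because $\Ds\neq\Df$ there, its characterization is not a transcription of the $\Tf$-result---the equality $\Ts(y,z,\xi)=\Ts(\ff y,\ff z,\xi)$ replaces the $\Tf$-relations of \thmref{thm:FiT1}---so each listed identity must be verified by hand from $\Ts=\D\eta\otimes\eta$ and the antisymmetry $F(x,y,\xi)=-F(y,x,\xi)=F(\ff x,\ff y,\xi)$. A secondary point is that the stated properties should also single out each class rather than merely hold within it; this converse is inherited from the classification \eqref{Fcon}, since the basic classes decompose the space of admissible tensors $F$ and on each class the passage $F\mapsto\Ts$ is invertible on the relevant components.
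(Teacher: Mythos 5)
Your proposal is correct, and its global skeleton is the same as the paper's: both use \thmref{thm-D1=D2} to split the eleven classes into the nine classes constituting $\UU_1$, where $\Ds=\Df$ forces $\Ts=\Tf$ and the characterizations are transcribed verbatim from \thmref{thm:FiT1} (with $\ddot{t}=\dot{t}$, $\ddot{t}^*=\dot{t}^*$, $\widehat{\ddot{t}}=\widehat{\dot{t}}$ from \eqref{t-2} and \eqref{t*-om-2}), and the exceptional classes $\F_3$, $\F_7$. Where you genuinely diverge is the computational vehicle for those two classes. The paper works from the expression \eqref{T2-F+F} of $\Ts$ purely in terms of $F$: it specializes it under the $\F_3$-conditions of \eqref{Fcon} to get \eqref{T2-F3}, reads off $\Ts(\xi,y)=0$ and $\eta(\Ts(x,y))=0$, derives $\Ts(x,\ff y,\ff z)=-\Ts(x,y,z)$ via \eqref{F-prop}, and treats $\F_7$ ``similarly''. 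You instead start from the abbreviated torsions $\Ts=-\frac{1}{4}N^{\mathrm{h}}$ on $\F_3$ and $\Ts=\D\eta\otimes\eta$ on $\F_7$ displayed just before the theorem (themselves consequences of \eqref{T2-Nhv}, \eqref{N-1-11} and \lemref{lem-F}), after which every listed identity collapses to a one-line fact: the $\F_3$-identity $\Ts(x,y)=-\ff\Ts(x,\ff y)$ becomes $N(\ff^2x,\ff^2y,\ff^2z)=-N(\ff^2x,\ff y,\ff z)$, \ie the third relation of \eqref{N-prop} with $x\mapsto\ff^2x$, while the $\F_7$-identities reduce to $\D\eta(\xi,\cdot)=0$ and $\D\eta(\ff\cdot,\ff\cdot)=\D\eta(\cdot,\cdot)$, both immediate from \lemref{lem-F}$(3)$, $\ff\xi=0$ and the antisymmetric $\ff$-invariant tensor $F(\cdot,\cdot,\xi)$ of \eqref{Fcon}. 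I checked these reductions and they are valid; your route buys shorter verifications at the cost of presupposing the abbreviated formulas, whereas the paper's route is self-contained relative to \eqref{T2-F+F} and runs parallel to the $\F_2$-computation in the proof of \thmref{thm:FiT1}. (Incidentally, your derivation lands on the stated identity $\Ts(x,y)=-\ff\Ts(x,\ff y)$, whereas the paper's proof text concludes the $\F_3$ case with $\Ts(x,y)=-\Ts(\ff x,\ff y)$, an apparent slip.) One shared caveat: like the paper, you only verify that each class implies the listed $\Ts$-conditions; your closing appeal to invertibility of $F\mapsto\Ts$ on the relevant components for the converse is plausible but not proved---since the paper's own proof is equally one-directional, this is not a gap relative to it.
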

\begin{proof}
In the cases when $\M \in \UU_1$, $\Df$ coincides with $\Ds$, according to \thmref{thm-D1=D2}. Then the conditions for $\Ts$ coincide with the corresponding conditions for $\Tf$ from \thmref{thm:FiT1}.

Let us now suppose that $\M \in \F_3$. Taking into account \eqref{Fcon} in the considered class, the expression of $\Ts$ from \eqref{T2-F+F} takes the following form:
\begin{equation}\label{T2-F3}
\begin{split}
\Ts(x,y,z)=&-\dfrac{1}{2}\left\{F(x,\ff y,z)-F(y,\ff x,z)\right\}\\[6pt]
&+\dfrac{1}{8}\left\{F(\ff y,\ff z,\ff x)+F(\ff^2 y,z,\ff x)\right.\\[6pt]
&\phantom{+\dfrac{1}{8}\left\{\right.}\left.-F(\ff x,\ff z,\ff y)-F(\ff^2 x,z,\ff y)\right\}.
\end{split}
\end{equation}
Therefore, using \eqref{T2-defn} and \eqref{strM2}, we find that the following identities hold:
\[
\Ts(\xi,y)=0,\qquad \eta\left(\Ts(x,y)\right)=0.
\]
By virtue of \eqref{T2-F3}, \eqref{F-prop} and \eqref{Fcon}, we obtain
\[
\begin{split}
\Ts(x,\ff y,\ff z)=\,&\dfrac{1}{2}\left\{F(x,\ff y,z)-F(y,\ff x,z)\right\}\\[6pt]
&-\dfrac{1}{8}\left\{F(\ff y,\ff z,\ff x)+F(\ff^2 y,z,\ff x)\right.\\[6pt]
&\phantom{-\dfrac{1}{8}\left\{\right.}\left.-F(\ff x,\ff z,\ff y)-F(\ff^2 x,z,\ff y)\right\}.
\end{split}
\]
So, comparing the latter equality and \eqref{T2-F3}, it immediately follows
\[
\Ts(x,y)=-\Ts(\ff x,\ff y),
\]
with which we established the characteristics of $\F_3$ with respect to $\Ts$.

Similarly, we get the conditions for $\Ts$ in the case when $\M \in \F_7$.
\end{proof}

\section{Example}\label{sect-4}
In \cite{HM21}, a 5-dimensional Lie group $\mathcal{G}$ equipped with an invariant Riemannian $\Pi$-structure $(\phi, \xi, \eta, g)$ is considered. The constructed Riemannian $\Pi$-manifold belongs to the basic class $\F_4$. Therefore, $(\mathcal{G}, \phi, \xi, \eta, g)$ is a manifold from the class $\UU_1$, i.e., by virtue of \thmref{thm-D1=D2}, the first natural connection $\Df$ and the second natural connection $\Ds$ coincide. Therefore, all the assertions made for $\Df$ in this example also hold for $\Ds$.

Let us consider a Lie group $\mathcal{L}$ of dimension $5$ which has a basis $\{e_0,\dots, e_{4}\}$ of left-invariant vector fields on $\mathcal{L}$ and let the corresponding Lie algebra be defined by the following commutators
\begin{equation*}\label{comL}
\begin{array}{l}
[e_1,e_2] = [e_3,e_4] = \lm_1 e_1 + \lm_2 e_2 + \lm_3 e_3+ \lm_4 e_4 + 2\mu_1 e_0,\\[6pt]
[e_1,e_4] = - [e_2,e_3] = \lm_3 e_1 + \lm_4 e_2 + \lm_1 e_3+ \lm_2 e_4 + 2\mu_2 e_0,\\[6pt]
\end{array}
\end{equation*}
where $\lm_i, \mu_j\in\mathbb{R}$ $(i=1,2,3,4; j=1,2)$ and $[e_k,e_l]=0$ in the other cases.

Let $(\ff,\xi,\eta)$ be an invariant $\Pi$-structure defined by
\begin{equation}\label{strEx2}
\begin{array}{l}
\xi=e_0, \quad \ff  e_1=e_{3},\quad  \ff e_2=e_{4},\quad \ff  e_3=e_{1},\quad \ff  e_4=e_{2}, \\[6pt]
\eta(e_1)=\eta(e_2)=\eta(e_3)=\eta(e_4)=0,\quad \eta(e_0)=1.
\end{array}
\end{equation}
Let $g$ stand for a Riemannian metric determined by
\begin{equation}\label{gEx2}
\begin{array}{l}
g(e_i,e_i)=1, \quad g(e_i,e_j)=0,\quad i,j\in\{0,1,\dots,4\},\; i\neq j.
\end{array}
\end{equation}
Thus, by virtue of \eqref{strM} and \eqref{strM2}, the constructed manifold $(\mathcal{L},\ff, \xi, \eta, g)$ is a Riemannian $\Pi$-manifold.

\begin{thm}\label{thm-ex2}
The Riemannian $\Pi$-manifold $(\mathcal{L},\ff, \xi, \eta, g)$ belongs to the class $\F_7$.
\end{thm}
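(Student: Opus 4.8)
The plan is to verify directly that the fundamental tensor $F$ of $(\mathcal{L},\ff,\xi,\eta,g)$ satisfies the three defining conditions of the class $\F_7$ listed in \eqref{Fcon}. Concretely, I would first compute the Levi-Civita connection $\n$ of $g$ on the left-invariant frame $\{e_0,\dots,e_4\}$, then evaluate the components $F(e_i,e_j,e_k)=g\bigl((\n_{e_i}\ff)e_j,e_k\bigr)$ from \eqref{F} on all basis triples, and finally check that the resulting tensor has exactly the $\F_7$ shape.

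Since $g$ has constant components on left-invariant vector fields, the Koszul formula collapses to $2g(\n_{e_i}e_j,e_k)=g([e_i,e_j],e_k)-g([e_j,e_k],e_i)+g([e_k,e_i],e_j)$, so all connection coefficients are read off directly from the structure constants in the given commutators. I would tabulate the nonzero $\n_{e_i}e_j$, noting that the $\mu_1,\mu_2$ terms are the only ones that produce an $e_0=\xi$ component, whereas the $\lm_1,\dots,\lm_4$ terms produce only horizontal components; in particular $\n_{e_i}\xi$ is governed entirely by $\mu_1,\mu_2$, consistent with $\eta(\n_x\xi)=0$. From these I compute $(\n_{e_i}\ff)e_j=\n_{e_i}(\ff e_j)-\ff(\n_{e_i}e_j)$ using \eqref{strEx2} and contract with $g$ to obtain the full list of components of $F$.

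With $F$ in hand I would verify the three $\F_7$ identities: that $F(x,y,z)=F(x,y,\xi)\eta(z)+F(x,z,\xi)\eta(y)$, that $F(x,y,\xi)=-F(y,x,\xi)$, and that $F(x,y,\xi)=F(\ff x,\ff y,\xi)$. The crucial structural feature driving the computation is that the commutators are $\ff$-compatible on the horizontal distribution, since $[e_1,e_2]=[e_3,e_4]=[\ff e_1,\ff e_2]$ and $[e_1,e_4]=[e_3,e_2]=[\ff e_1,\ff e_4]$; this paracomplex-integrability-type symmetry forces the purely horizontal components $F(e_i,e_j,e_k)$ with $i,j,k\in\{1,\dots,4\}$ to cancel, so that $F$ is supported only on terms carrying $\xi$ in a slot, yielding the first condition and placing the manifold in $\widehat{\UU}_0=\F_3\oplus\F_7$. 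The skew arrangement of the $\mu_1,\mu_2$ contributions then produces the antisymmetry $F(\cdot,\cdot,\xi)$ and its $\ff$-invariance, which singles out $\F_7$ rather than $\F_3$. I expect the main obstacle to be organizational rather than conceptual: the number of nonzero connection coefficients is sizeable, and the delicate point is confirming that the $\lm$-contributions to the horizontal part of $F$ truly vanish, so that no $\F_3$ (or other) summand survives; this cancellation is precisely what the special symmetry of the commutators guarantees.
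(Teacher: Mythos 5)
Your proposal follows essentially the same route as the paper's proof: compute the Levi-Civita connection on the left-invariant frame via the Koszul formula, tabulate the components $F_{ijk}$ (where indeed only the $\mu_1,\mu_2$ terms survive and all $\lm$-contributions to the horizontal part cancel), and verify the $\F_7$ characteristic conditions from \eqref{Fcon} --- the paper does exactly this, additionally invoking the explicit formula for the $\F_7$-component of $F$ from \cite{ManVes18}. One cosmetic slip in your commentary: the vanishing of the purely horizontal components places $F$ in the span of the classes supported on $F(\cdot,\cdot,\xi)$, namely $\F_6\oplus\F_7\oplus\F_8\oplus\F_9$ (together with ruling out $\F_{10}$, $\F_{11}$ via $F(\xi,\cdot,\cdot)=0$), not in $\widehat{\UU}_0=\F_3\oplus\F_7$ --- an $\F_3$-component is purely horizontal, so it is excluded rather than included at that stage --- but your final check of the antisymmetry and $\ff$-invariance of $F(\cdot,\cdot,\xi)$ still correctly isolates $\F_7$.
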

\begin{proof}
By dint of \eqref{strM}, \eqref{strM2} and the well-known Koszul equality regarding $g$ and $\n$, we obtain the components of $\n$ as follows:
\begin{subequations}\label{n-ex2}
\begin{equation}
\begin{array}{l}
\n_{e_0} e_0 = 0,\\[6pt]
\n_{e_0} e_1 = \n_{e_1} e_0 = -\mu_1 e_2-\mu_2 e_4,\\[6pt]
\n_{e_0} e_2 = \n_{e_2} e_0 = \mu_1 e_1+\mu_2 e_3,\\[6pt]
\n_{e_0} e_3 = \n_{e_3} e_0 = -\mu_2 e_2-\mu_1 e_4,\\[6pt]
\n_{e_0} e_4 = \n_{e_4} e_0 = \mu_2 e_1+\mu_1 e_3,\\[6pt]
\n_{e_1} e_1 = \n_{e_3} e_3 = -\lm_1 e_2-\lm_3 e_4,\\[6pt]
\n_{e_1} e_3 = \n_{e_3} e_1 = -\lm_3 e_2-\lm_1 e_4,\\[6pt]
\n_{e_2} e_2 = \n_{e_4} e_4 = \lm_2 e_1+\lm_4 e_3,\\[6pt]
\n_{e_2} e_4 = \n_{e_4} e_2 = \lm_4 e_1+\lm_2 e_3,\\[6pt]
\n_{e_1} e_2 = \n_{e_3} e_4 = \lm_1 e_1+\lm_3 e_3+\mu_1 e_0,\\[6pt]
\end{array}
\end{equation}
\begin{equation}
\begin{array}{l}
\n_{e_1} e_4 = \n_{e_3} e_2 = \lm_3 e_1+\lm_1 e_3+\mu_2 e_0,\\[6pt]
\n_{e_2} e_1 = \n_{e_4} e_3 = -\lm_2 e_2-\lm_4 e_4-\mu_1 e_0,\\[6pt]
\n_{e_2} e_3 = \n_{e_4} e_1 = -\lm_4 e_2-\lm_2 e_4-\mu_2 e_0.
\end{array}
\end{equation}
\end{subequations}

Using \eqref{F}, \eqref{strEx2}, \eqref{gEx2} and \eqref{n-ex2}, we calculate the components $F_{ijk}=F(e_i,e_j,\allowbreak{}e_k)$. Also, we obtain the components of $\ta$, $\ta^*$ and $\om$ by \eqref{ta-ta*}. The nonzero ones of them are determined by the following equalities and their well-known symmetries from \eqref{F-prop}:
\begin{equation*}\label{F-ex2}
\begin{array}{l}
F_{104}=F_{302}=-F_{203}=-F_{401}=\mu_1,\\[6pt]
F_{102}=F_{304}=-F_{201}=-F_{403}=\mu_2.
\end{array}
\end{equation*}

By virtue of the latter equalities, \eqref{Fcon} and the following form of the components of $F$ in $\F_7$, given in \cite{ManVes18},
\begin{equation*}
\begin{split}
&F_{7}(x,y,z)=\dfrac{1}{4}\bigl\{ %
[F(\ff^2 x,\ff^2 y,\xi)-F(\ff^2 y,\ff^2 x,\xi)\\[6pt]
&\phantom{F_{7}(x,y,z)=\dfrac{1}{4}\bigl\{ %
}
+F(\ff x,\ff y,\xi)-F(\ff y,\ff x,\xi)]\eta(z)\\[6pt]
&\phantom{F_{7}(x,y,z)=\dfrac{1}{4}\bigl\{} %
+ [F(\ff^2 x,\ff^2 z,\xi)-F(\ff^2 z,\ff^2 x,\xi)\\[6pt]
&\phantom{F_{7}(x,y,z)=\dfrac{1}{4}\bigl\{[}
+F(\ff x,\ff z,\xi)-F(\ff z,\ff x,\xi)]\eta(y)\bigr\},
\end{split}
\end{equation*}
we establish the truthfulness of the theorem.
\end{proof}

Let us remark that, according to \thmref{thm-ex2}, $(\mathcal{L}, \phi, \xi, \eta, g)$ is a manifold from $\widehat{\UU}_0$, \ie bearing in mind \thmref{thm-D1=D2}, $\Df$ and $\Ds$ on the constructed manifold do not coincide.

Let us consider $\Df$ on $(\mathcal{L},\allowbreak{}\ff, \xi, \eta, g)$ defined by \eqref{D1}. Then, using \eqref{D1}, \eqref{strEx2} and \eqref{n-ex2}, we obtain the components of $\Df$. The nonzero ones are the following:
\[
\begin{array}{ll}
\Df_{e_0}e_1=-\mu_1 e_2-\mu_2 e_4, \quad &
\Df_{e_0}e_2=\mu_1 e_1+\mu_2 e_3, \\[6pt]
\Df_{e_0}e_3=-\mu_2 e_2-\mu_1 e_4, \quad &
\Df_{e_0}e_4=\mu_2 e_1+\mu_1 e_3, \\[6pt]
\Df_{e_1}e_1=\Df_{e_3}e_3=-\lm_1 e_2-\lm_3 e_4, \quad &
\Df_{e_1}e_2=\Df_{e_3}e_4=\lm_1 e_1+\lm_3 e_3, \\[6pt]
\Df_{e_1}e_3=\Df_{e_3}e_1=-\lm_3 e_2-\lm_1 e_4, \quad &
\Df_{e_1}e_4=\Df_{e_3}e_2=\lm_3 e_1+\lm_1 e_3, \\[6pt]
\Df_{e_2}e_1=\Df_{e_4}e_3=-\lm_2 e_2-\lm_4 e_4, \quad &
\Df_{e_2}e_2=\Df_{e_4}e_4=\lm_2 e_1+\lm_4 e_3, \\[6pt]
\Df_{e_2}e_3=\Df_{e_4}e_1=-\lm_4 e_2-\lm_2 e_4, \quad &
\Df_{e_2}e_4=\Df_{e_4}e_2=\lm_4 e_1+\lm_2 e_3.
\end{array}
\]

Using \eqref{strEx2}, \eqref{gEx2}, \eqref{n-ex2}, \eqref{D1-T} and \eqref{D1-T-03}, we calculate the components $\Tf_{ijk}=\Tf(e_i,e_j,e_k)$ of $\Tf$. The nonzero ones of them are determined by the following equalities and their well-known antisymmetries by the first and second argument:
\begin{equation}\label{T1-ex2}
\begin{array}{l}
\Tf_{102}=\Tf_{021}=\Tf_{304}=\Tf_{043}=\dfrac{1}{2}\Tf_{210}=\dfrac{1}{2}\Tf_{430}=\mu_1,\\[6pt]
\Tf_{104}=\Tf_{023}=\Tf_{302}=\Tf_{041}=\dfrac{1}{2}\Tf_{410}=\dfrac{1}{2}\Tf_{230}=\mu_2.
\end{array}
\end{equation}

Taking into account \eqref{t1} and \eqref{T1-ex2}, we obtain $\dot{t}$, $\dot{t}^*$ and $\widehat{\dot{t}}$ and get that they are all zero, \ie
\[
\dot{t}=\dot{t}^*=\widehat{\dot{t}}=0.
\]

Let us now consider $\Ds$ on $(\mathcal{L},\ff, \xi, \eta, g)$ defined by \eqref{T2-defn} and \eqref{T2-prop}. By virtue of \eqref{T1-ex2} and \eqref{T2=T1+}, we obtain the components $\Ts_{ijk}=\Ts(e_i,e_j,e_k)$ of $\Ts$. The nonzero ones of them are determined by the following equalities and their well-known antisymmetries by the first and second argument:
\begin{equation*}\label{T2-ex2}
\begin{array}{l}
\Ts_{210}=\Ts_{430}=2\mu_1,\qquad
\Ts_{410}=\Ts_{230}=2\mu_2.
\end{array}
\end{equation*}

As it is proved in \eqref{t-2} and \eqref{t*-om-2}, the torsion forms of the first and second natural connection coincide, \ie $\ddot{t}$, $\ddot{t}^*$ and $\widehat{\ddot{t}}$ also vanish on $(\mathcal{L},\ff, \xi, \eta, g)$ and the following equalities hold:
\[
\ddot{t}=\ddot{t}^*=\widehat{\ddot{t}}=0.
\]

The obtained results regarding the torsion properties of the constructed Rie\-man\-nian $\Pi$-manifold $(\mathcal{L},\ff, \xi, \eta, g)$ confirm the statements made in \thmref{thm:FiT1} and \thmref{thm:FiT2} in the case of the class $\F_7$.

\vspace{6pt}


\begin{thebibliography}{999}
\bibitem{ManSta01}
\textsc{M.\,Manev, M.\,Staikova.} \textit{On almost paracontact Rie\-man\-nian manifolds of type $(n,n)$}, Journal of Geometry {\bf 72} (2001), 108--114.

\bibitem{ManVes18}
\textsc{M.\,Manev,  V.\,Tavkova.}
\emph{On the almost paracontact almost pa\-ra\-comp\-lex Rie\-man\-nian manifolds},
Facta Universitatis, Series: Mathematics and Informatics \textbf{33}   (2018), 637--657.

\bibitem{IvMan2}
\textsc{S.\,Ivanov, H. \,Manev, M. \,Manev.}
{\em Para-Sasaki-like Rie\-man\-nian manifolds and new Einstein metrics},
Revista de la Real Academia de Ciencias Exactas, Fisicas y Naturales. Serie A. Matematicas RACSAM {\bf 115} (2021), art.no. 112.

\bibitem{HMan3}
\textsc{H.\,Manev, M.\,Manev.}
\emph{Pair of associated Schouten-van Kam\-pen con\-nec\-tions adapted to an almost paracontact almost paracomplex Riemannian structure}.
Mathematics  {\bf  9} (7) (2021), art. no. 736.

\bibitem{HM17}
\textsc{H.\,Manev, M.\,Manev.}
\emph{Para-Ricci-like solitons on Rie\-man\-nian manifolds with almost paracontact structure and almost paracomplex structure}.
Mathematics  {\bf  9} (14) (2021), art. no. 1704.

\bibitem{KobNom}
\textsc{S.\,Kobayashi, K.\,Nomizu.} \textit{Foundations of differential ge\-om\-e\-try}, vol. I, Wiley-Interscience (1963).


\bibitem{Ale-Gan2}
\textsc{V. Alexiev, G. Ganchev.} \emph{Canonical connection on a con\-for\-mal
almost con\-tact metric manifolds}, Annual of Sofia Uni\-ver\-si\-ty ''St. Kliment Ohridski''. Faculty of Mathematics and Informatics {\bf 81} (1987), no. 1, 29--38.


\bibitem{Gan-Mi}
\textsc{G. Ganchev, V. Mihova.} \emph{Canonical connection and the
ca\-no\-ni\-cal con\-for\-mal group on an almost complex manifold with
$B$-met\-ric}, Annual of Sofia Uni\-ver\-si\-ty ''St. Kliment Ohridski''. Faculty of Mathematics and Informatics {\bf 81}
(1987), 195--206.









\bibitem{Mek-P-con}
\textsc{D.\,Mekerov.}
\textit{P-connection on Riemannian almost product
ma\-ni\-folds},  Comptes rendus de l’Academie bulgare des Sci\-en\-ces {\bf 62} (2009), 1363--1370.





\bibitem{Man-Gri2}
\textsc{M.\,Manev, K.\,Gribachev.} \textit{Conformally invariant tensors on
almost con\-tact manifolds with $B$-metric}, Serdica Ma\-the\-ma\-ti\-cal Journal {\bf 20} (1994), 133--147.





\bibitem{StaGri}
\textsc{M.\,Staikova, K.\,Gribachev.}
\textit{Canonical connections and their conformal in\-vari\-ants on Riemannian P-manifolds}, Ser\-di\-ca Mathematical Journal {\bf 18} (1992), 150--161.


\bibitem{Sato76}
\textsc{I.\,Sat\={o}.} \textit{On a structure similar to the almost contact struc\-ture},
Tensor (N.S.) {\bf 30}  (1976), 219--224.

\bibitem{HM21}
\textsc{H.\,Manev.}
\emph{First natural connection on Riemannian $\Pi$-manifolds}.
arXiv: 2301.11694.

\bibitem{Mihova}
 \textsc{V.\,Mihova.} \textit{Canonical connections and the canonical con\-for\-mal group on a Riemannian
almost product manifold},
 Serdica Mathematical Journal {\bf 15}  (1989), 351--358.

\end{thebibliography}
\end{document}